\documentclass[11pt]{article}
\bibliographystyle{acm}
\usepackage{amsmath}
\usepackage{amssymb}
\usepackage{amsthm}
\usepackage{lineno}
\usepackage{graphicx}
\usepackage{caption}
\usepackage{subcaption}
\usepackage{blkarray}
\usepackage{enumerate}
\usepackage{enumitem}
\usepackage[font=small,skip=0pt]{caption}
\usepackage{xcolor}
\usepackage{verbatim}
\usepackage{geometry}
\usepackage{enumerate}
\usepackage{wrapfig}
\usepackage{color,soul}
\usepackage{float}
\usepackage{lineno}
\usepackage[stable]{footmisc}
\usepackage{lineno}
\usepackage{blindtext}
\usepackage{calc}
\usepackage{upgreek}
\usepackage{amsmath}
\usepackage{amssymb}
\usepackage{amsthm}
\usepackage{lineno}
\usepackage{graphicx}
\usepackage{caption}
\usepackage{subcaption}
\usepackage[font=small,skip=0pt]{caption}
\usepackage{xcolor}
\usepackage{verbatim}
\usepackage{geometry}
\usepackage[colorinlistoftodos]{todonotes}
\usepackage{wrapfig}
\usepackage{color,soul}
\usepackage{float}
\usepackage{url}
\usepackage[colorinlistoftodos]{todonotes}
\usepackage{wrapfig}
\usepackage{color,soul}
\usepackage{float}
\usepackage{multirow}

%\linenumbers
%\usepackage{kpfonts} % {APOSTOLOS STYLE}
\numberwithin{equation}{section}
\newtheorem{Theorem}{Theorem}
\newtheorem{Corollary}{Corollary}

\newtheorem*{Note*}{Note}
\newtheorem{Proposition}{Proposition}
\newtheorem{Remark}{Remark}
\newtheorem{Lemma}{Lemma}

\newtheorem*{Recall*}{Recall}
\newcommand{\geqs}{\geqslant}
\newcommand{\leqs}{\leqslant}
\graphicspath{{Figures/}}
\usepackage{color}

\definecolor{ao(english)}{rgb}{0.0, 0.5, 0.0}

\usepackage[utf8]{inputenc}
\usepackage[titles]{tocloft}

\title{Gerber-Shiu Theory for Discrete Risk Processes in a Regime Switching Environment}
\author{
        Zbigniew Palmowski$^{a,}$\footnote{Corresponding author e-mail address: zbigniew.palmowski@gmail.com. This work is partially supported by Polish National Science Centre Grant
No. 2021/41/B/HS4/00599 (2022-2026).}\,, \,  Lewis Ramsden$^{b,}$\footnote{Second author e-mail address: lewis.ramsden@york.ac.uk. This work is partially supported by London Mathematical Society Grant Ref.\,41911.}\, and   Apostolos D. Papaioannou$^{c,}$\footnote{Third author  e-mail address: papaion@liverpool.ac.uk}     \\
      \\ $^a$Department of Applied Mathematics\\
       Wroc\l{}aw University of Science and Technology \\
       Wroc\l{}aw, Poland; \\
        $^b$ The School for Business and Society, University of York\\
       York, Yorkshire, YO10 5DD, United Kingdom;\\
       $^c$Institute for Financial and Actuarial Mathematics \\
               Department of Mathematical Sciences\\
        University of Liverpool, L69 7ZL,  United Kingdom
       }
% \date{}

\begin{document}
\maketitle
%\linenumbers
%\tableofcontents

\begin{abstract}
\noindent In this paper we develop the Gerber-Shiu theory for the classic and dual discrete risk processes in a Markovian (regime switching) environment. In particular, by expressing the Gerber-Shiu function in terms of potential measures of an upward (downward) skip-free discrete-time and discrete-space Markov Additive Process (MAP), we derive closed form expressions for the Gerber-Shiu function in terms of the so-called (discrete) $\bold{W}_{v}$ and $\bold{Z}_{v}$ scale matrices, which were introduced in \cite{ourpaper}. We show that the discrete scale matrices allow for a unified approach for identifying the Gerber-Shiu function as well as the value function of the associated constant dividend barrier problems.
\end{abstract}

\noindent {\bf Keywords:} Gerber-Shiu; Discrete-Time; Dual Risk Process; Markov Additive Process; Scale Matrices; Exit Problems; Dividends; Markov-Modulation

\section{Introduction}
\label{Intro}

Gerber-Shiu theory lies at the heart of modern risk and ruin theory as a unifying method of analysis for a number of popular risk measures via the so-called expected discounted penalty or Gerber-Shiu (G-S) function. Shortly after its introduction in the seminal paper \cite{GerberShiu}, the G-S function received a great deal of attention for a variety of risk models and has led to a huge library of literature, see \cite{Cheng2000}, \cite{Gerber2005} and \cite{li2005general}, to name only a select few. Following these initial developments, G-S theory attracted further attention, and its versatility has been explored for more exotic risk models, including for e.g.\, investment income (\cite{Paulsen2008-lj}) and dividend barriers (\cite{Lin2003}, \cite{Yang2008-yn}), among others. In fact, over the years the initial construction of the G-S function has been adapted to include further risk related quantities, e.g., the minima prior to ruin \cite{Biffis2010} and the number of claims until ruin \cite{Frostig2012}, without altering the tractability of its results and G-S theory has now become an umbrella term for a number of other risk related quantities including the expected discounted dividends and accumulated capital injections until ruin, to name a few. For a general overview of the G-S literature, the readers are directed to \cite{asmussen2010ruin}, \cite{kyprianou2013gerber} and references therein.

One particular class of continuous-time risk models for which G-S theory has been developed in more recent years are L\'evy insurance risk models, see for example \cite{Avrametal.2015}, \cite{Garrido2006} and \cite{kyprianou2013gerber}. For this general class of processes, for which a number of classical risk models can be seen as special cases: the compound Poisson, diffusion and Poisson jump-diffusion models, the G-S function can be expressed in terms of the so-called $W^{(q)}$ and $Z^{(q)}$ scale functions, which provide an over-arching representation for many previously derived expressions. In fact, this unifying approach has been shown to hold in the even larger class of Markov additive risk processes (MAP), for which an external Markov process influencing the underlying distribution of the risk process is considered (Markov-modulation), via the existence of so-called scale matrices,  see \cite{feng2014potential}.

The discrete-time analogue of the G-S theory (with and without dividend barriers) has also received some attention over the years but to a much lesser extent to that of the continuous-time setting and only for specific risk models. For example, ruin probabilities and other risk related quantities for the compound binomial risk model can be found in \cite{Chen2014-hf} and \cite{Willmot1993DR}, whilst  \cite{Wu2009} derive a recursive expression for the G-S function itself in a  discrete-time renewal risk model with arbitrary inter-arrival claim times. Markov-modulation has also been considered in the discrete-time setting through so-called semi-Markov models introduced in \cite{reinhard2000probability}, \cite{reinhard2001distribution} and \cite{reinhard2002severity}, where special cases of the G-S function were considered and later generalised in \cite{chen2014survival} who derive recursive formulae for the survival probabilities under weaker conditions. More recently, \cite{Chen2014-hf} derive a closed form expression for the expected discounted dividends for the semi-Markov risk model in terms of an auxiliary function satisfying a recursive expression, whilst \cite{kim2022parisian} obtain a matrix expression for the G-S function for the dual semi-Markov risk model which is then used to determine Parisian type ruin probabilities. Although each of the papers mentioned above derive individual results for the G-S function and other related quantities, to the best of the authors' knowledge, there does not exist an over-arching, unifying set of expressions for G-S theory in discrete-time like those of the scale functions/matrices for MAPs in continuous-time.

The aim of this paper is to derive such unifying expressions for the G-S function and the expected discounted dividends until ruin for a general discrete Markov additive-type risk model (Markov-modulated random walk) and its `dual' counterpart in terms of discrete scale matrices. This is done by first deriving results from potential theory which provides a connection between the G-S function and the fluctuation theory results for a Markov additive chain derived in \cite{ourpaper}.

The rest of this paper is organised as follows: In Section 2, we introduce a general Markov additive-type risk model, its dual counterpart and the corresponding G-S functions. In Section 3, we present an overview of the results from fluctuation theory of Markov additive chains that are utilised in the subsequent sections to derive expressions for the G-S function and expected discounted dividends until ruin. In Section 4, we derive results for the associated potential measure of the risk process, which allows us to find closed form expressions for the G-S functions. Finally, within Section 4, we introduce the value function for the expected discounted dividends until ruin and use the previous theory to derive closed form expressions for these quantities for, both, the regular and the dual Markov additive risk models.

\section{Risk Models and Gerber-Shiu Function}
\label{sec:models}
\noindent Let us define a discrete-time risk process, denoted $\{U_n\}_{n \in \mathbb{N}}$, which models the reserve of an insurer at time $n \in \mathbb{N}$, by
\begin{linenomath*}
\begin{equation}
\label{eqCDTRM1}
U_n=u+n-\sum_{i=1}^n Z_i,
\end{equation}
\end{linenomath*}
where $u \in \mathbb{N}$ represents the insurers (integer) initial reserve, premium is received at a unit rate per period of time an $\{Z_k\}_{k \in \mathbb{N}^+}$ is a sequence of integer claim sizes describing the claim size at period $k \in \mathbb{N}^+$. This simple model is known within the literature as the compound binomial risk model and was first introduced as a discrete counter part to the continuous-time Poisson risk model in \cite{Gerber1988}.

Within a Markovian environment, the above risk process is further influenced by an underlying discrete-time homogeneous Markov chain, denoted by $\{J_n\}_{n \in \mathbb{N}}$ with finite state space $E = \left\{ 1, 2, \ldots, N \right\}$, which describes the \textit{phase} of the risk process at period  $n \in \mathbb{N}$ having transition probability matrix $\bold{P}$, with $(i,j)$-th element
\begin{linenomath*}\begin{equation}\label{TransProb}
p_{ij} := \mathbb{P} \left( J_1 = j | J_{0} =i \right)
\end{equation}\end{linenomath*}
and influences the risk process, $\{U_n\}_{n \in \mathbb{N}}$, through the claim size distributions. That is, we assume that the random non-negative claim amounts, namely $\{Z_k\}_{k \in \mathbb{N}^+}$, are \textit{conditionally} independent and identically distributed (i.i.d.)\,random variables, given $\{J_{k-1}=i\}$, with distribution described by the probability mass matrix $\bold{\Lambda}(\cdot)$, with $(i,j)$-th element
\begin{linenomath*}\begin{equation}\label{ClaimMat}
\lambda_{ij}(m) :=\mathbb{P}(Z_1=m, J_1 = j | J_0 = i), \quad \text{for} \quad m=0,1,2, \ldots,
\end{equation}\end{linenomath*}
and finite means $\mathbb{E}(Z_1\mathbb{I}_{(J_1 = j)}|J_0 = i)<\infty$ for all $i,j \in E$. We point out, due to its importance in the following, that the claim amounts $\{Z_k\}_{k \in \mathbb{N}^+}$ have a mass point at zero with probability $\lambda_{ij}(0)>0$ for some, $i,j \in E$.

Due to the models phase dependencies described above, it will be convenient in this paper to introduce a probability measure matrix $\mathbb{P}\left( \cdot, J_n\right)$ and corresponding expectation operator matrix $\mathbb{E}\left(\cdot\,; J_n\right)$, having $(i,j)$-th elements $\mathbb{P}(\cdot, J_n = j | J_0 = i)$ and $\mathbb{E}\left( \cdot \mathbb{I}_{(J_n = j)} |J_0 = i\right)$, respectively, for $i,j \in E$.

Of principle interest within G-S theory are the distributional properties related to the \textit{time of ruin} which are obtained via the so-called G-S function. For the discrete-time Markovian risk model given in Eq.\,\eqref{eqCDTRM1}, we define the time of ruin by
\begin{linenomath*}\begin{equation*}
\label{eqTimeRuin}
\tau_0=\inf \{n\in \mathbb{N}: U_n  \leqs 0\},
\end{equation*}\end{linenomath*}
with $\tau_0=\infty$ if $U_n>0$ for all $n\in \mathbb{N}$ and the G-S function, denoted $\phi(u)$, by
\begin{linenomath*}\begin{equation}
\phi(u) =  \boldsymbol{\alpha}^\top \bold{\Phi}(u) \boldsymbol{e},
\end{equation}\end{linenomath*}
where $\boldsymbol{\alpha}^\top= \left( \alpha_1, \ldots, \alpha_N\right)$ with $\alpha_i = \mathbb{P}\left( J_0 = i \right)$, for $i \in E$, denotes the initial distribution vector of $\{J_n\}_{n \in \mathbb{N}}$, $\boldsymbol{e}$ is the column vector of units and the $N$-dimensional square matrix $\bold{\Phi}(u)$ has $(i,j)$-th element

\begin{linenomath*}\begin{equation}\label{GSfunction}
\phi_{ij}(u):=
\mathbb{E}\left[v^{\tau_0}\omega (U_{\,{\tau_0-1}}, |U_{\,\tau_0}|)\mathbb{I}_{(\tau_0<\infty, J_\tau=j)}|J_0=i, U_0 = u\right].
\end{equation}\end{linenomath*}
The function $\omega: \mathbb{N}^+\times \mathbb{N} \rightarrow \mathbb{R}^+$ (non-negative real line) denotes a penalty function and $v\in (0,1]$ is a discounting factor. For the case where $v=1$ and $\omega(\cdot,\cdot)\equiv 1$, the G-S functions, $\phi_{ij}(u)$, reduces to the conditional infinite-time ruin probabilities
\begin{equation}
\label{eq:ruin}
\psi_{ij}(u) = \mathbb{P}\left(\tau_0 < \infty, J_{\tau_0} = j | J_0=i, U_0 = u \right).
\end{equation}

\begin{Remark}
 The definition of the ruin time $\tau_0$ given above is similar to \cite{Gerber1988}, whilst other authors define the ruin time in discrete models as the first time the reserve takes strictly negative values (see for example \cite{Willmot1993DR}).
\end{Remark}

\begin{Remark}
It is worth noting that the G-S function defined above could be further generalised  by considering a Markov dependent penalty function, $\omega_{ij}(\cdot)$. As the aim of this paper is to present a unifying theory which can be used to obtain known results from the G-S literature, this generalisation is not included explicitly here to allow for the comparison to previous findings. However, the reader should keep in mind that the following results are implicitly more general than they may appear.
\end{Remark}

\noindent In addition to the risk process given in Eq.\,\eqref{eqCDTRM1}, we are also interested in the distribution of the associated `dual' risk process within a Markovian regime-switching environment. The dual risk process, denoted $\{\widehat{U}_n\}_{n \in \mathbb{N}}$, represents a process with (deterministic) unit losses per period and random (integer) gains. As such, the dynamics (jumps) of the dual risk process are equivalent in distribution to those of the reflection of a `regular' process as defined in Eq.\,\eqref{eqCDTRM1}, i.e.

\begin{linenomath*}\begin{equation}\label{eqDual}
\{\Delta\widehat{U}_n \}_{n \in \mathbb{N}}\overset{d}{=} \{-\Delta U_n \}_{n \in \mathbb{N}},
\end{equation}\end{linenomath*}
where $\Delta U_n = U_n - U_{n-1}$. Throughout this paper, the \textit{dual} counterparts of risk processes and associated risk measures will be denoted with the hat symbol,  $\widehat{\cdot}$\,. In particular, the conditional G-S functions for the dual risk process within initial reserve $u \in \mathbb{N}$ are defined by
\begin{linenomath*}\begin{equation}\label{GSfunctiondual}
\widehat{\phi}_{ij}(u):= \mathbb{E}\left[v^{\widehat{\tau}_0}\omega(\widehat{U}_{\, \widehat{\tau_0}-1}, |\widehat{U}_{\,\widehat{\tau}_0}|)\mathbb{I}_{ \left(\widehat{\tau}_0<\infty, J_{\widehat{\tau}_0}=j\right)}|J_0=i, \widehat{U}_0 = u \right],
\end{equation}\end{linenomath*}
where
\begin{linenomath*}\begin{equation*}
\label{eqTimeRuin}
\widehat{\tau}_0=\inf \{n\in \mathbb{N}: \widehat{U}_n \leqs 0\},
\end{equation*}\end{linenomath*}
and the unconditional G-S function is given by
\begin{linenomath*}\begin{equation} \label{DGS}
\widehat{\phi}(u) =  \boldsymbol{\alpha}^\top \widehat{\bold{\Phi}}(u) \boldsymbol{e},
\end{equation}\end{linenomath*}
where $\widehat{\bold{\Phi}}(u)$ is a square matrix with elements $\widehat{\phi}_{ij}(u)$ for $i,j \in E$, as defined in Eq.\,\eqref{GSfunctiondual}. Note that, for the dual risk process, the `surplus' experiences at most a unit decrease per unit of time and thus, it follows that $\widehat{U}_{\, \widehat{\tau}_0 - 1} = 1$ and $\widehat{U}_{\, \widehat{\tau}_0} = 0$ a.s.. In this case, the G-S measure is usually reduced to the discounted or transform of the time to ruin, i.e.,

\begin{linenomath*}\begin{equation}
\label{GSfunctiondual1}
\widehat{\phi}_{ij}(u)= \mathbb{E}\left[v^{\widehat{\tau}_0}\mathbb{I}_{(\widehat{\tau}_0<\infty, J_{\widehat{\tau}_0}=j)}|J_0=i, \widehat{U}_0=u\right],
\end{equation}\end{linenomath*}
where the corresponding ruin probabilities, denoted $\widehat{\psi}_{ij}(u)$, can be recovered by simply setting $v = 1$ for all $i,j \in E$.

%To avoid the trivial cases that ruin occurs a.s.\,in either model, a relevant we net profit condition will be assumed holds to ensure that the surplus processes drift asymptotically to $+\infty$, i.e., $U_n (\widehat{U}_n)\rightarrow+\infty$  a.s. (see Remark \ref{rem:drift} of Section \ref{MACs} for details).

The key observation in this paper is that the risk process $\{U_n\}_{n \in \mathbb{N}}$ defined in Eq.\,\eqref{eqCDTRM1} and its dual counterpart, are in fact both of the form of a discrete-time and discrete space (lattice) MAP, also known as a \textit{Markov Additive Chain} (MAC). As such, we can exploit the fluctuation theory developed in \cite{ourpaper} for this general class of processes and express the G-S function(s) in terms of so-called \textit{scale matrices}. Hence, in Section \ref{MACs}, we will introduce the theory for MACs, along with the key results and observation derived in \cite{ourpaper}, which will later be adapted to the insurance risk set-up as discussed above. In Section \ref{sec:results}, we derive semi-explicit results for the G-S functions defined above as well as results for the associated constant dividend barrier strategies of both the classic and dual risk models.

\section{Markov Additive Chains}
\label{MACs}

Consider a bivariate discrete-time Markov chain $(X,J) = \{(X_n, J_n)\}_{n \in \mathbb{N}}$ on the product space $\mathbb{Z}\times E$, where $X_n \in \mathbb{Z}$ describes the \textit{level process}, whilst $J_n \in E$ is an underlying Markov chain as defined in Section \ref{sec:models}, known as the \textit{phase process} which affects the dynamics of $\{X_n\}_{n \in \mathbb{N}}$. It is assumed throughout that the underlying Markov chain $\{J_n\}_{n \in \mathbb{N}}$ is irreducible and positive recurrent, such that its stationary distribution $\boldsymbol{\pi}^\top = \left( \pi_1, \ldots, \pi_N\right)$ exists and is unique. The process $(X,J)$ is known as a MAC if it satisfies the so-called Markov additive property. That is, given that $\{ J_T = i \}$, for any $T \in \mathbb{N}$ and phase $i \in E$, the Markov chain $\{ (X_{T+n} - X_T, J_{T+n}) \}_{n \in \mathbb{N}}$ is independent of $\mathcal{F}_T$ (the natural filtration to which the bivariate process $(X, J)$ is adapted) and
\begin{linenomath*}\begin{equation*}
\{ (X_{T+n} - X_T, J_{T+n}) \} \overset{d}{=} \{ (X_n - X_0, J_n) \},
\end{equation*}\end{linenomath*}
given $\{J_0 = i\}$. Note that a consequence of the Markov additive property is that the level process $\{X_n\}_{n \in \mathbb{N}}$ is translation invariant on the lattice.

Based on this property, it is easy to see that any MAC is equivalent to a general Markov-modulated random walk where the level process $\{X_n\}_{n \in \mathbb{N}}$ has representation
\begin{linenomath*}\begin{equation}\label{eq:X}
X_n = x + Y_1 + Y_2 + \cdots + Y_n,
\end{equation}\end{linenomath*}
where $X_0=x$, and $\{Y_k\}_{k \in \mathbb{N}^+}$ is a sequence of conditionally i.i.d.\,random variables whose distributions depend on the phase process $\{J_n\}_{n \in \mathbb{N}}$ and described by the joint probability matrix $\bold{A}_m$, for $m \in \mathbb{Z}$, having $(i,j)$-th element
\begin{equation}
a_{ij}(m):= \mathbb{P}\left(Y_1 = m, J_1=j | J_0 =i \right).
\end{equation}

\noindent Although the above definition holds for a general MAC with jumps in either direction, in this paper we are primarily concerned with so-called upward skip-free or `spectrally negative' MACs. That is, we only consider MACs that can `drift' upwards by a maximum of one per unit time and can experience negative jumps only, i.e. $\bold{A}_m = \bold{0}$ (zero matrix), $\forall m \geqslant 2$.

\begin{Remark}
Note that for dual risk process which `drifts' downwards by one per unit time and has only positive jumps, i.e. $\bold{\widehat{A}}_m = \bold{0}$, $\forall m \leqslant -2$, corresponds to a`spectrally positive' MAC. However, by taking advantage of the reflective relationship between the two processes as stated in Eq.\eqref{eqDual}, the results for the dual risk process can, in fact, be expressed in terms of corresponding results for the regular risk process (see below for more details).
\end{Remark}

\noindent It is well known that random walks can be fully characterized by their probability generating functions (p.g.f.) due to their uniqueness and play an important role in their distributional properties. It turns out that this is also true for MACs where the p.g.f.\,is given in matrix form and defined in the following proposition which was proved in \cite{ourpaper}. In the following, we use the notation $\mathbb{E}_x(\cdot) := \mathbb{E}(\cdot | X_0 = x)$ with $\mathbb{E}(\cdot) \equiv \mathbb{E}_0(\cdot)$, where similar notation is employed for the associated probability measures.

\begin{Proposition}
\label{PropPGF2}
For $X_0 = 0$, the p.g.f.\,of the level process $\{X_n\}_{n \geqs 0}$, is given by
\begin{linenomath*}\begin{equation*}
 \mathbb{E}\left( z^{-X_n}\, ; J_n \right) = \bigl(\bold{F}(z)\bigr)^n,
\end{equation*}\end{linenomath*}
where
\begin{linenomath*}\begin{linenomath*}\begin{equation}
\label{eqFMat}
\bold{F}(z) := \mathbb{E}\left( z^{-X_1}\, ; J_1 \right) = \sum_{m=-1}^\infty z^{m}\bold{A}_{-m}.
\end{equation}\end{linenomath*}\end{linenomath*}
In particular, for $z=1$, we have
\begin{linenomath*}\begin{equation*}
\bold{F}(1) = \bold{P} = \sum_{m=-1}^\infty \bold{A}_{-m} .
\end{equation*}\end{linenomath*}
\end{Proposition}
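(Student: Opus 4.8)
The plan is to prove the p.g.f.\ identity by induction on $n$, exploiting the Markov additive property directly. First I would establish the base case $n=1$, which is essentially the definition: I would compute $\mathbb{E}\bigl(z^{-X_1}\,; J_1\bigr)$ by conditioning on $\{J_0 = i\}$ and summing over the possible increments $Y_1 = m$. Since $X_1 = Y_1$ when $X_0 = 0$, the $(i,j)$-th entry is $\sum_{m} z^{-m}\, a_{ij}(m) = \sum_{m} z^{-m}\,\mathbb{P}(Y_1 = m, J_1 = j \mid J_0 = i)$, and by the skip-free assumption $\bold{A}_m = \bold{0}$ for $m \geqs 2$, the sum over $m$ runs only over $m \leqs 1$; re-indexing via $m \mapsto -m$ gives exactly $\sum_{m=-1}^{\infty} z^{m}\bold{A}_{-m} = \bold{F}(z)$.

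For the inductive step, assume $\mathbb{E}\bigl(z^{-X_n}\,; J_n\bigr) = \bigl(\bold{F}(z)\bigr)^n$. I would write $z^{-X_{n+1}} = z^{-X_n}\, z^{-(X_{n+1} - X_n)}$ and condition on $\mathcal{F}_n$, in particular on the phase $J_n$. The $(i,j)$-th entry of $\mathbb{E}\bigl(z^{-X_{n+1}}\,; J_{n+1}\bigr)$ can be expanded by inserting a sum over the intermediate phase $J_n = k$:
\begin{linenomath*}\begin{equation*}
\mathbb{E}_0\bigl(z^{-X_{n+1}}\mathbb{I}_{(J_{n+1}=j)} \mid J_0 = i\bigr) = \sum_{k \in E} \mathbb{E}_0\bigl(z^{-X_n}\mathbb{I}_{(J_n=k)} \mid J_0 = i\bigr)\,\mathbb{E}_0\bigl(z^{-(X_{n+1}-X_n)}\mathbb{I}_{(J_{n+1}=j)} \mid J_n = k\bigr),
\end{equation*}\end{linenomath*}
where the factorisation is justified precisely by the Markov additive property: conditionally on $\{J_n = k\}$, the increment $(X_{n+1}-X_n, J_{n+1})$ is independent of $\mathcal{F}_n$ and distributed as $(X_1 - X_0, J_1)$ given $\{J_0 = k\}$. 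The first factor is the $(i,k)$-th entry of $\bigl(\bold{F}(z)\bigr)^n$ by the inductive hypothesis, and the second factor is the $(k,j)$-th entry of $\bold{F}(z)$ by the base case together with translation invariance. Recognising the sum over $k$ as matrix multiplication yields $\bigl(\bold{F}(z)\bigr)^n \bold{F}(z) = \bigl(\bold{F}(z)\bigr)^{n+1}$, completing the induction.

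Finally, setting $z = 1$ gives $\bold{F}(1) = \sum_{m=-1}^{\infty} \bold{A}_{-m}$, whose $(i,j)$-th entry is $\sum_{m}\mathbb{P}(Y_1 = m, J_1 = j \mid J_0 = i) = \mathbb{P}(J_1 = j \mid J_0 = i) = p_{ij}$, i.e.\ $\bold{F}(1) = \bold{P}$; the relation $\mathbb{E}(z^{-X_n}\,; J_n)\big|_{z=1} = \bold{P}^n$ is then immediate and consistent, since it simply records the $n$-step transition probabilities of $\{J_n\}$. The only genuinely delicate point is the justification of the factorisation in the inductive step: one must be careful that the conditional independence supplied by the Markov additive property is invoked with respect to the correct $\sigma$-algebra $\mathcal{F}_n$, and that summing over the intermediate phase $k$ is legitimate (which it is, since $E$ is finite and all entries are non-negative, so no convergence issue arises for $z \in (0,1]$ beyond the finiteness already assumed via $\mathbb{E}(Z_1 \mathbb{I}_{(J_1=j)}\mid J_0=i) < \infty$). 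Everything else is bookkeeping with matrix entries.
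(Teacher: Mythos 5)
Your proof is correct: the base case correctly identifies $\bold{F}(z)$ from the increment distribution and the skip-free assumption, and the inductive step is a legitimate application of the tower property combined with the Markov additive property (conditional independence from $\mathcal{F}_n$ plus stationarity of increments given the phase), with the sum over the intermediate phase $k$ being exactly matrix multiplication. The paper itself does not prove this proposition but defers to the cited reference \cite{ourpaper}; your induction is the standard argument one would expect there, so there is nothing to flag.
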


	\noindent The primary concern of this paper is to derive exit problems for the above MAC for different levels or strips. An important property influencing these quantities is the asymptotic drift of the level process $\{X_n\}_{n \in \mathbb{N}}$. It was shown in \cite{AsmussenApplied} that this depends solely on the Perron-Frobenius eigenvalue of the matrix $\bold{F}(z)$, denoted $\kappa(z)$, such that $X_n \rightarrow +(-) \infty$ if and only if $\kappa'(1) <(>)\, 0$, where
	\begin{linenomath*}\begin{equation}\label{eq:drift}
			\kappa'(1) = -\mathbb{E}^{\boldsymbol{\pi}}\left(X_1\right)  =   \boldsymbol{\pi}^\top \sum_{m = -1}^\infty m\, \bold{A}_{-m}\bold{e},
	\end{equation}\end{linenomath*}
	with $\mathbb{E}^{\boldsymbol{\pi}}\left(\cdot \right)$ denoting the expectation operator under the assumption that $J_0$ has stationary initial probability (see \cite{AsmussenApplied} and \cite{ourpaper} for more details).

\begin{Remark}
	These conditions correspond to the so-called net-profit condition often implemented in the risk theory literature to ensure that ruin does not occur a.s., and will be implemented as and where necessary in Section \ref{sec:results}.
\end{Remark}

\noindent For the remainder of this paper, it will be assumed that the matrix $\bold{A}_1$ is non-singular and thus, its inverse $\bold{A}_1^{-1}$ exists. Although this is a somewhat restrictive assumption, it is necessary to present the following results in a consistent way which align with the existing literature. However, \cite{Ivanovs2019} and consequently \cite{ourpaper}, show that the following general results still hold for arbitrary $\bold{A}_1$ but at the cost of familiar representation and comparability to existing results.

\subsection{Occupation Times}

Occupation times describe the number of periods (time) that the MAC $(X,J)$ spends in any given state and forms another fundamental quantity within its analysis.

Let $\bold{L}_v(x, n)$ denote the \textit{occupation mass matrix} describing the discounted time the process $\{(X_n, J_n)\}_{n \in \mathbb{N}}$ spends in state $(x, j)\in \mathbb{Z} \times E$ - up to and including time $n \in \mathbb{N}$ - with $(i,j)$-th element
\begin{linenomath*}\begin{equation}
\label{eqOccDenMat}
\bigl(\bold{L}_v(x, n)\bigr)_{ij} = \mathbb{E}\left(\sum_{k=0}^n v^k \mathbb{I}_{(X_k = x, J_k=j)} \bigg| J_0 = i\right).
\end{equation}\end{linenomath*}
%which, by application of the strong Markov property, satisfies the following proposition proved in \cite{ourpaper}.
%\begin{Proposition}
%\label{PropDens}
%For the occupation mass matrix  $\bold{L}_v(x, n)$, it follows that
%\begin{linenomath*}\begin{equation}
%\bold{L}_v(x, \infty) = \mathbb{E}\left( v^{\tau^{\{x\}}}; J_{\tau^{\{x\}}} \right) \bold{L}_v,
%\end{equation}\end{linenomath*}
%where
%\begin{linenomath*}\begin{equation}
%\label{hittingtime}
%\tau^{\{x\}} = \inf\{ n \geqs 0 : X_n = x \},
%\end{equation}\end{linenomath*}
%is the first `hitting' time of the level $x\in \mathbb{Z}$ and $\bold{L}_v := \bold{L}_v(0, \infty)$ is the expected occupation mass matrix at the level $0$ over an infinite-time horizon, which has strictly positive entries.\end{Proposition}

\noindent Due to the strong Markov and Markov additive properties of the MAC, occupation times have proven vital tools for obtaining a variety of results regarding exit problems and other distributional quantities of the MAC and MAPs in general (see \cite{ivanovs2012occupation} and \cite{ivanovs2014potential} among others).  In the following section, we will introduce the concept of discrete scale matrices, their relationship to occupation times and recall some of the main results found in \cite{ourpaper}. One such result worth stating at this point, is how the $z$-transform of the above occupation mass matrix can be expressed in terms of the fundamental p.g.f. $\bold{F}(z)$ of the MAC and is given in the following theorem, which was proved in \cite{ourpaper}.

\begin{Theorem} \label{thm:Lv}
For all $v, z\in (0,1]$ such that $\bold{I} - v\bold{F}(z)$ is non-singular, it follows that
\begin{equation} \label{eq:Lv}
	\sum_{x\in \mathbb{Z}} z^{-x} \bold{L}_v(x, \infty) = \left( \bold{I} - v\bold{F}(z)\right)^{-1}.
\end{equation}
\end{Theorem}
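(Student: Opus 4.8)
The plan is to reduce the statement to the matrix p.g.f.\ of Proposition~\ref{PropPGF2} and then sum a matrix geometric (Neumann) series. First I would let $n \to \infty$ in the definition \eqref{eqOccDenMat}. Since each summand $v^k\mathbb{I}_{(X_k = x,\, J_k = j)}$ is non-negative, monotone convergence together with linearity of expectation gives, entrywise,
\[
\bigl(\bold{L}_v(x,\infty)\bigr)_{ij} = \sum_{k=0}^\infty v^k\, \mathbb{P}\bigl(X_k = x,\, J_k = j \mid J_0 = i\bigr),
\]
that is, $\bold{L}_v(x,\infty) = \sum_{k\geqs 0} v^k\, \mathbb{P}(X_k = x,\, J_k)$ as an identity of matrices with non-negative entries, where $\mathbb{P}(X_k = x,\, J_k)$ denotes the usual $k$-step sub-probability matrix restricted to $\{X_k = x\}$.

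Next I would form the $z$-transform and interchange the order of summation. For $v, z \in (0,1]$ every weight $z^{-x}$ and every probability is non-negative, so Tonelli's theorem applies to the double sum over $x \in \mathbb{Z}$ and $k \geqs 0$ (and to the matrix-entry sum), yielding
\[
\sum_{x\in\mathbb{Z}} z^{-x}\,\bold{L}_v(x,\infty) = \sum_{k=0}^\infty v^k \sum_{x\in\mathbb{Z}} z^{-x}\,\mathbb{P}(X_k = x,\, J_k) = \sum_{k=0}^\infty v^k\, \mathbb{E}\bigl(z^{-X_k}\,;J_k\bigr),
\]
the innermost sum being precisely the matrix p.g.f.\ of the level process at time $k$. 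By Proposition~\ref{PropPGF2} it equals $\bigl(\bold{F}(z)\bigr)^k$, so the right-hand side becomes the matrix geometric series $\sum_{k\geqs 0}\bigl(v\bold{F}(z)\bigr)^k$. One should also record here that $\bold{F}(z)$ is a genuine finite, entrywise non-negative matrix for each $z \in (0,1]$: from \eqref{eqFMat} it is dominated entrywise by $z^{-1}\bold{P}$, since $z^m \leqs z^{-1}$ for all $m \geqs -1$ when $z \leqs 1$.

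It then remains to identify $\sum_{k\geqs 0}\bigl(v\bold{F}(z)\bigr)^k$ with $\bigl(\bold{I} - v\bold{F}(z)\bigr)^{-1}$. I would use the telescoping identity $\bigl(\bold{I} - v\bold{F}(z)\bigr)\sum_{k=0}^N\bigl(v\bold{F}(z)\bigr)^k = \bold{I} - \bigl(v\bold{F}(z)\bigr)^{N+1}$: once the partial sums are known to converge, the remainder $\bigl(v\bold{F}(z)\bigr)^{N+1}$ must tend to $\bold{0}$, and multiplying through by the (assumed to exist) inverse $\bigl(\bold{I} - v\bold{F}(z)\bigr)^{-1}$ gives the claimed formula. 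The delicate point --- and the one I expect to be the main obstacle --- is exactly this convergence: non-singularity of $\bold{I} - v\bold{F}(z)$ does not by itself force the series to converge. I would close the gap by exploiting non-negativity: since $v\bold{F}(z) \geqs \bold{0}$ entrywise, its spectral radius is an eigenvalue by Perron--Frobenius, so $\bold{I} - v\bold{F}(z)$ non-singular together with the fact that the monotone partial sums $\sum_{k=0}^N\bigl(v\bold{F}(z)\bigr)^k = \sum_{x}z^{-x}\bold{L}_v(x,N)$ increase to the finite left-hand side forces that eigenvalue below $1$, hence the Neumann series converges and the telescoping argument closes the proof.
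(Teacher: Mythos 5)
First, note that the paper itself does not prove Theorem \ref{thm:Lv}: it is quoted from \cite{ourpaper}, so your argument can only be judged on its own terms. The first two stages are correct and are surely the intended route: monotone convergence in \eqref{eqOccDenMat}, Tonelli for the non-negative double array, and Proposition \ref{PropPGF2} legitimately reduce the left-hand side of \eqref{eq:Lv} to the series $\sum_{k\geqslant 0}\bigl(v\bold{F}(z)\bigr)^k$, as an identity of matrices with entries in $[0,\infty]$.

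The genuine gap is in the final step, where you write that the partial sums ``increase to the finite left-hand side''. Finiteness of the left-hand side is not a fact you are entitled to: convergence of the transform is part of what the theorem asserts, and the stated hypothesis (non-singularity of $\bold{I}-v\bold{F}(z)$) does not deliver it. Your own Perron--Frobenius observation shows why: non-singularity only says that $1$ is not an eigenvalue, i.e. $\rho\bigl(v\bold{F}(z)\bigr)\neq 1$; it does not exclude $\rho\bigl(v\bold{F}(z)\bigr)>1$, in which case the series --- and hence the left-hand side --- is $+\infty$. This is not a hypothetical worry: in the scalar case $N=1$ with $v=1$ and an upward-drifting walk ($\kappa'(1)<0$), one has $F(z)>1$ for $z$ slightly below $1$, so $1-F(z)$ is non-zero (non-singular), while $\sum_x z^{-x}L_1(x,\infty)=+\infty$ because the expected number of visits to each level $x\geqslant 0$ is a positive constant and the weights $z^{-x}$ grow geometrically. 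So the argument is circular at exactly the point you yourself flagged as delicate, and no argument can repair it under the literal hypothesis. The fix is to strengthen the hypothesis to $\rho\bigl(v\bold{F}(z)\bigr)<1$ (equivalently, to assume the transform converges, which is presumably what \cite{ourpaper} intends); under that hypothesis your telescoping Neumann-series step, and with it the whole proof, is complete and clean.
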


\subsection{Exit Problems and Scale Matrices} \label{sec:ExitProb}
Let us define $\tau_y^{\pm}$, to be the first time the level process $\{X_n\}_{n \in \mathbb{N}}$ up(down)-crosses the level $y \in \mathbb{Z}$, such that
\begin{linenomath*}\begin{equation}
\label{crossingtime}
\tau_y^{+} = \inf \{ n \geqs 0: X_n \geqs y\} \quad \text{and} \quad \tau_y^{-} = \inf \{ n \geqs 0: X_n \leqs y\},
\end{equation}\end{linenomath*}
and the so-called \textit{hitting} times as
\begin{equation} \label{eq:hitting}
\tau^{\{y\}} = \inf\{n \in \mathbb{N}: X_n = y\}.
\end{equation}
Recall that in this paper we are concerned only with `spectrally negative' MACs for which the level process can increase at most one per unit time. A consequence of this is that the upward crossing time, $\tau^+_y$, for $y \geqs  x=X_0$, is equivalent to the hitting time $\tau^{\{y\}}$ and we have $X_{\tau^+_y}  = X_{\tau^{\{y\}}} =  y$. Note that this is not necessarily true for the downward crossing time due to the presence of downward jumps. It turns out that the upward skip-free property of spectrally negative MACs results in $\{J_{\tau^+_y}\}_{y \geqs x}$ being a homogenous Markov chain and, consequently, the so-called \textit{first passage process}, namely $\{(\tau_y^+, J_{\tau^+_y})\}_{y \geqs x}$, being itself a MAC. With this in mind, let us define $\bold{G}_v$ to be the transform of the first hitting time of the upper level $y = 1$, given $X_0 = 0$, such that
\begin{linenomath*}\begin{equation}
\label{GvMatrixOrig}
\mathbb{E}\bigl( {v}^{\tau^+_1} ; J_{\tau^+_1} \bigr) = \bold{G}_{v},
\end{equation}\end{linenomath*}
with $\bold{G}_1 \equiv \bold{G}$ denoting the one-step transition probability matrix of $\{J_{\tau_y^+}\}_{y \geqs x}$. This is known as one of the \textit{fundamental matrices} of MACs, each of which play a vital role in the fluctuation theory (see \cite{ivanovs2012occupation} for the corresponding matrices in the continuous setting).
%It is worth highlighting at this point how consideration of the killed process aides our analysis. From the above expression, notice that
%\begin{align*}
%	\mathbb{E}(v^{\tau^+_1}; J_{\tau^+_1}) &= \sum_{n=1}^\infty v^n \mathbb{P}(\tau^+_1 = n, J_n) \\
%	&= \sum_{n=1}^\infty \mathbb{P}(\tau^+_1 = n, J_n, g_v > n) \\
%	&= \mathbb{P}(\tau_1^+< \infty, J_{\tau^+_1}, g_v > \tau_1^+).
%\end{align*}
With this in mind, we have the following theorem which concerns the distribution of the one-sided upward exit times of the MAC, which are given explicitly in terms of the fundamental matrix $\bold{G}_v$ and was proved in \cite{ourpaper}.

\begin{Theorem}[One-sided upward]
	 \label{ThmOneUp}
For $X_0=x$, $v \in (0,1]$ and $a \geqs x$, the transform of the upward crossing/hitting time $\tau^+_a$ satisfies
\begin{linenomath*}\begin{equation}
\label{GvMatrix}
\mathbb{E}_x\bigl( {v}^{\tau^+_a} ; J_{\tau^+_a} \bigr) = \bold{G}_{v}^{a-x},
\end{equation}\end{linenomath*}
where the matrix $\bold{G}_{v}$ is the right solution of the matrix equation $\bold{F}(\cdot) = {v}^{-1}\bold{I}$.
\end{Theorem}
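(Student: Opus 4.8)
The plan is to establish the two assertions of the theorem separately: the semigroup identity $\mathbb{E}_x\bigl(v^{\tau^+_a};J_{\tau^+_a}\bigr)=\bold{G}_v^{\,a-x}$, and then that $\bold{G}_v$ solves $\bold{F}(\bold{G}_v)=v^{-1}\bold{I}$, where throughout $\bold{F}$ is read as in \eqref{eqFMat} with a matrix argument inserted \emph{on the right}, i.e.\ $\bold{F}(\bold{G}_v):=\sum_{m=-1}^{\infty}\bold{A}_{-m}\bold{G}_v^{\,m}$.

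For the first identity I would induct on $a-x\in\mathbb{N}$. The case $a=x$ is trivial since $\tau^+_x=0$ and $J_{\tau^+_x}=J_0$, giving $\bold{I}=\bold{G}_v^{\,0}$. For the inductive step, upward skip-freeness forces $X_{\tau^+_{a-1}}=a-1$ exactly (the level cannot jump over $a-1$), and $\tau^+_{a-1}\leqs\tau^+_a$. Writing $v^{\tau^+_a}=v^{\tau^+_{a-1}}\cdot v^{\,\tau^+_a-\tau^+_{a-1}}$, conditioning on $\mathcal{F}_{\tau^+_{a-1}}$, and using the strong Markov property together with the Markov additive property and the translation invariance of the level process (the discussion around \eqref{eq:X}), one obtains that, given $J_{\tau^+_{a-1}}$, the pair $\bigl(\tau^+_a-\tau^+_{a-1},J_{\tau^+_a}\bigr)$ has the law of $\bigl(\tau^+_1,J_{\tau^+_1}\bigr)$ started at level $0$. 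Hence $\mathbb{E}_x\bigl(v^{\tau^+_a};J_{\tau^+_a}\bigr)=\mathbb{E}_x\bigl(v^{\tau^+_{a-1}};J_{\tau^+_{a-1}}\bigr)\bold{G}_v$, and the inductive hypothesis finishes it. (Alternatively, invoke the assertion made just before the theorem that $\{(\tau^+_y,J_{\tau^+_y})\}_{y\geqs x}$ is itself a MAC and argue as in Proposition~\ref{PropPGF2}.)

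For the functional equation I would perform a first-step analysis on $(Y_1,J_1)$ in \eqref{eq:X} from $X_0=0$. Because $\bold{A}_m=\bold{0}$ for $m\geqs 2$, the first increment obeys $Y_1\in\{1,0,-1,-2,\dots\}$: on $\{Y_1=1\}$ one has $\tau^+_1=1$, contributing $v\bold{A}_1$; on $\{Y_1=-k\}$ with $k\geqs 0$ the chain is at level $-k$ after one step, and by the first part the transform of the remaining ascent to level $1$ is $\bold{G}_v^{\,k+1}$. Summing the (matrix) contributions and using conditional independence of $(Y_1,J_1)$ from the future given $J_1$,
\begin{equation*}
\bold{G}_v = v\bold{A}_1 + v\sum_{k=0}^{\infty}\bold{A}_{-k}\bold{G}_v^{\,k+1} = v\Bigl(\bold{A}_1\bold{G}_v^{-1}+\sum_{k=0}^{\infty}\bold{A}_{-k}\bold{G}_v^{\,k}\Bigr)\bold{G}_v = v\,\bold{F}(\bold{G}_v)\,\bold{G}_v ,
\end{equation*}
since, after prepending the $m=-1$ term $\bold{A}_1\bold{G}_v^{-1}$, the bracketed series is exactly $\bold{F}$ evaluated at $\bold{G}_v$. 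Cancelling $\bold{G}_v$ then yields $\bold{F}(\bold{G}_v)=v^{-1}\bold{I}$.

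Two points require care and constitute the main obstacles. The convergence of $\sum_k\bold{A}_{-k}\bold{G}_v^{\,k+1}$ and the interchange of sum and expectation are routine: all matrices are entrywise non-negative, $\sum_{k\geqs 0}\bold{A}_{-k}=\bold{P}-\bold{A}_1\leqs\bold{P}$ has row sums at most $1$, and $\bold{G}_v$ (hence each $\bold{G}_v^{\,k}$) has entries in $[0,1]$, so the series has row sums bounded by $1$. The genuinely delicate point is the invertibility of $\bold{G}_v$, needed both to give meaning to the term $\bold{A}_1\bold{G}_v^{-1}$ (hence to ``$\bold{F}(\bold{G}_v)$'') and to cancel $\bold{G}_v$ at the end; this is exactly where the standing hypothesis that $\bold{A}_1$ is non-singular enters. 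I would obtain it by noting that, by skip-freeness, the last step before $\tau^+_1$ must be a $+1$-jump with $X_{\tau^+_1-1}=0$, which gives the factorisation $\bold{G}_v=v\,\bold{H}_v\bold{A}_1$ with $\bold{H}_v$ the $v$-discounted taboo occupation matrix of level $0$ prior to $\tau^+_1$; writing $\bold{H}_v=(\bold{I}-\bold{N})^{-1}$ for the corresponding one-step taboo kernel $\bold{N}$ (strictly sub-stochastic when $v<1$) shows $\bold{H}_v$ is non-singular, and $\bold{A}_1$ is non-singular by assumption. Finally, the assertion that $\bold{G}_v$ is \emph{the} right solution rather than merely a solution would be addressed separately, by a Perron--Frobenius / spectral-radius argument identifying it as the minimal non-negative solution of the matrix equation; this last step, together with the invertibility, is where I would expect most of the real work to lie.
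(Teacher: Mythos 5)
Your proof is correct, but note that this paper never proves Theorem \ref{ThmOneUp}: the statement is imported from the companion work \cite{ourpaper}, so there is no in-paper argument to compare against. Your two-step route --- iterating the single-level passage via skip-freeness and the Markov additive property to get $\bold{G}_v^{\,a-x}$, then a first-step decomposition giving $\bold{G}_v = v\sum_{m=-1}^{\infty}\bold{A}_{-m}\bold{G}_v^{\,m+1}$ and cancelling $\bold{G}_v$ --- is the standard argument for upward skip-free chains and is surely what the cited reference does. The one point to tighten is the invertibility of $\bold{G}_v$ when $v=1$: strict substochasticity of your taboo kernel $\bold{N}$ then comes not from discounting but from the fact that every row of $\bold{A}_1$ is nonzero (implied by its assumed non-singularity), so each visit to level $0$ escapes to level $1$ with probability bounded away from zero; the paper itself defers exactly this invertibility issue to Remark 5 of \cite{ourpaper}, and your acknowledged gap on minimality/uniqueness of the solution is likewise outside what the theorem's statement strictly requires.
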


\begin{Remark} \label{rem:numsol}
The matrix $\bold{G}_v$, as the right solution of the above matrix equation, can only be found explicitly in a few special cases. However, there exists a number of numerical algorithms that can be employed to obtain approximations. For a detailed survey of such algorithms, see \cite{bini2005numerical} and references therein. The left solution of this matrix equation is also of importance to the analysis of exit problems and is associated with the time-reversed counterpart of $\bold{G}_v$ (see Section \ref{sec:time-rev}). Moreover, the matrix $\bold{G}_v$ can be shown to be invertible as long as $\bold{A}_1$ is invertible (non-singular) (see Remark 5 of \cite{ourpaper} for details).
\end{Remark}

\noindent For the so-called two-sided exit problem, we are interested in exiting from the (fixed) `strip' $[0,a]$, with $a > 0$. More formally, we are interested in the events $\{ \tau_a^+ < \tau_0^-\}$ and $\{ \tau_a^+ > \tau_0^-\}$, which correspond to the upward and downward exits from the strip, respectively, with $X_0 = x \in [0,a]$. In a similar way to the one-sided upward exit given above, the following theorems from \cite{ourpaper} show that the two-sided exit problems (upward and downward) can be expressed in terms of two other fundamental matrices, known as the $\bold{W}_v(\cdot)$ and $\bold{Z}_v(\cdot)$ scale matrices.

In order to accurately describe the domain for which the following results exist, we will define $\Gamma(\bold{M})$ to be the set of all eigenvalues for the matrix $\bold{M}$.

 \begin{Theorem}[Two-sided upward]
	\label{thm:TwoSideUp}
	For $X_0=x \in [0,a]$ and $v\in (0,1]$, there exists a matrix $\bold{W}_v: \mathbb{N} \rightarrow \mathbb{R}^{N\times N}$ with $\bold{W}_v(0) = \bold{0}$ and $\bold{W}_1(\cdot) =: \bold{W}(\cdot)$, which is invertible such that
	\begin{linenomath*}\begin{equation}\label{twoexitprob}
			\mathbb{E}_x\left(v^{\tau^+_a};\tau^+_a < \tau^-_0, J_{\tau^+_a} \right) = \bold{W}_v(x)\bold{W}_v(a)^{-1},
	\end{equation}\end{linenomath*}
	where $\bold{W}_v(\cdot)$ satisfies
	\begin{linenomath*}\begin{equation}
			\label{eqWTran}
			\sum_{n=0}^\infty z^n \bold{W}_v(n) = \Bigl( v\bold{F}(z) - \bold{I}  \Bigr)^{-1},
	\end{equation}\end{linenomath*}
	for  $z \in (0, 1]$ and $z\notin \Gamma(\bold{G}_v)$. Additionally, we have the alternative representation
	\begin{linenomath*}\begin{equation}\label{wld}
			\bold{W}_v(n) = \bold{G}_v^{-n} \bold{L}^+_v(n),
	\end{equation}\end{linenomath*}
	where $\bold{L}^+_v(n) := \bold{L}_v(0, \tau^+_n -1)$
%\begin{equation} \label{eq:L+}
%	\left(\bold{L}^+_v(n)\right)_{i,j\in E} = \mathbb{E}\left(\sum_{k=0}^{\textcolor{red}{\tau^+_n -1}} v^k \mathbb{I}_{(X_k=0, J_k = j)}\bigg| J_0 = i \right),
%\end{equation}
denotes the \textit{occupation time} at level 0 before hitting the upper level $n \in \mathbb{N}^+$ for a general, unrestricted MAC.
\end{Theorem}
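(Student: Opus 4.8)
The plan is to establish the three assertions in turn: the existence of a matrix function $\bold{W}_v(\cdot)$ with the claimed transform \eqref{eqWTran}, the identity \eqref{twoexitprob} for the two-sided upward exit, and finally the occupation-time representation \eqref{wld}. I would begin with the occupation-time representation, since it both produces the matrix $\bold{W}_v$ directly and makes the normalisation $\bold{W}_v(0)=\bold{0}$ transparent. Define $\bold{L}^+_v(n) := \bold{L}_v(0,\tau^+_n-1)$, the discounted occupation mass at level $0$ strictly before the first passage to level $n$, and set $\bold{W}_v(n) := \bold{G}_v^{-n}\bold{L}^+_v(n)$. For $n=0$ we have $\tau^+_0 = 0$ (since $X_0 = 0$ in the defining set-up), so the sum in \eqref{eqOccDenMat} is empty and $\bold{L}^+_v(0)=\bold{0}$, giving $\bold{W}_v(0)=\bold{0}$. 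The key structural input is the upward skip-free property: since the level process increases by at most one per step, $\tau^+_n = \tau^{\{n\}}$ and $X_{\tau^+_n} = n$ a.s., so the first-passage process $\{(\tau^+_y, J_{\tau^+_y})\}_{y\geqs 0}$ is itself a MAC with one-step transform $\bold{G}_v$ (as recalled before Theorem \ref{ThmOneUp}).

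For the transform identity \eqref{eqWTran}, I would decompose the full (unrestricted) occupation mass matrix at an arbitrary level according to whether the upper level $n$ has been reached. Concretely, using the strong Markov property at $\tau^+_n$ together with Theorem \ref{ThmOneUp} ($\mathbb{E}(v^{\tau^+_n};J_{\tau^+_n}) = \bold{G}_v^n$) and translation invariance on the lattice, one obtains a renewal-type relation expressing $\bold{L}_v(0,\infty)$ — or more precisely the occupation mass at level $-k$ for $k\geqs 0$ — in terms of $\bold{L}^+_v(\cdot)$ and $\bold{G}_v$. Taking the $z$-transform $\sum_{n\geqs 0} z^n \bold{W}_v(n) = \sum_{n\geqs 0} z^n \bold{G}_v^{-n}\bold{L}^+_v(n)$ and comparing with Theorem \ref{thm:Lv}, which gives $\sum_{x\in\mathbb{Z}} z^{-x}\bold{L}_v(x,\infty) = (\bold{I}-v\bold{F}(z))^{-1}$, should yield $\sum_{n\geqs 0} z^n\bold{W}_v(n) = (v\bold{F}(z)-\bold{I})^{-1}$ after rearranging; the restriction $z\notin\Gamma(\bold{G}_v)$ is exactly what is needed for $\bold{G}_v^{-n}$ and the geometric-type series manipulations to make sense. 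Invertibility of $\bold{W}(\cdot) = \bold{W}_1(\cdot)$ for large enough argument follows because, when $v=1$, $\bold{L}^+_1(n)$ is a genuine Green's matrix for the killed chain and is nonsingular (the chain visits level $0$ before reaching $n$ with positive probability from every phase, by irreducibility).

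For the two-sided exit probability \eqref{twoexitprob}, the standard argument is a last-exit / first-passage decomposition over level $0$. Starting from $X_0 = x\in[0,a]$, on the event $\{\tau^+_a < \tau^-_0\}$ the path first reaches level $a$ without having gone weakly below $0$; conditioning on the last visit to level $0$ before $\tau^+_a$ (or, equivalently, applying the strong Markov property at $\tau^+_a$ and comparing the occupation mass at $0$ under the killed and unkilled dynamics) relates the killed quantity to $\bold{W}_v(x)$ and $\bold{W}_v(a)$, and the ratio $\bold{W}_v(x)\bold{W}_v(a)^{-1}$ drops out. I expect the main obstacle to be the bookkeeping in this step: because the process is only skip-free upward and not downward, the downward exit from the strip can overshoot $0$, so one must be careful to phrase everything in terms of occupation mass \emph{at} level $0$ and first passage \emph{to} level $a$ (which is hit exactly), rather than in terms of overshoot distributions; getting the measure-matrix (non-commutative) factors in the correct left/right order throughout is where the argument is delicate. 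Non-singularity of $\bold{I}-v\bold{F}(z)$ for the relevant range of $z$, needed to invoke Theorem \ref{thm:Lv}, will be handled by the eigenvalue condition $z\notin\Gamma(\bold{G}_v)$ together with $v\in(0,1]$.
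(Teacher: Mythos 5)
First, a point of reference: this paper does not actually prove Theorem \ref{thm:TwoSideUp} --- it is quoted from \cite{ourpaper} --- so there is no internal proof to compare against. Judged on its own terms, your outline has the right ingredients (defining $\bold{W}_v(n)=\bold{G}_v^{-n}\bold{L}_v^+(n)$, reading off $\bold{W}_v(0)=\bold{0}$ from the empty occupation sum, exploiting $\tau^+_a=\tau^{\{a\}}$ and the MAC structure of the first passage process), and it is the same occupation-time philosophy this paper uses where it does give proofs, e.g.\ Theorem \ref{potenetialthm}. But it is a plan rather than a proof: the two load-bearing steps are asserted, not carried out.

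Concretely, for \eqref{twoexitprob} you say that a last-exit decomposition over level $0$ makes ``the ratio $\bold{W}_v(x)\bold{W}_v(a)^{-1}$ drop out''; that is the conclusion, not the argument, and the overshoot at $\tau^-_0$ that you flag is precisely where it could fail. What is missing is the pair of decompositions whose comparison eliminates the overshoot functional. On one hand, splitting $\mathbb{E}_x(v^{\tau^+_a};J_{\tau^+_a})=\bold{G}_v^{a-x}$ according to whether $\tau^-_0$ occurs first gives
\begin{equation*}
\bold{G}_v^{a-x}=\mathbb{E}_x\bigl(v^{\tau^+_a};\tau^+_a<\tau^-_0,J_{\tau^+_a}\bigr)+\mathbb{E}_x\bigl(v^{\tau^-_0}\bold{G}_v^{-X_{\tau^-_0}};\tau^-_0<\tau^+_a,J_{\tau^-_0}\bigr)\bold{G}_v^{a};
\end{equation*}
on the other hand, splitting the occupation of level $0$ before $\tau^+_a$ at the successive times $\tau^+_x$ and $\tau^-_0$ (every visit to $0$ from a level $x\geqs 1$ occurs at or after $\tau^-_0$, and from a starting level $-y\leqs 0$ the occupation of $0$ before reaching $a$ equals $\bold{G}_v^{y}\bold{L}_v^+(a)$ by translation invariance and skip-freeness) gives
\begin{equation*}
\bold{L}_v^+(a)=\bold{L}_v^+(x)+\bold{G}_v^{x}\,\mathbb{E}_x\bigl(v^{\tau^-_0}\bold{G}_v^{-X_{\tau^-_0}};\tau^-_0<\tau^+_a,J_{\tau^-_0}\bigr)\bold{L}_v^+(a).
\end{equation*}
Solving the second display for the common overshoot term, substituting into the first and using $\bold{G}_v^{-x}\bold{G}_v^{a}=\bold{G}_v^{a-x}$ yields \eqref{twoexitprob}; without exhibiting this (or an equivalent) cancellation the key identity is unproved. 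The same criticism applies to \eqref{eqWTran}: ``should yield after rearranging'' hides the real content, which is the hitting-time identity $\mathbb{E}(v^{\tau^{\{-n\}}};J)=\bold{G}_v^{-n}-\bold{W}_v(n)\bold{L}_v^{-1}$ (re-derived in this paper inside the proof of Theorem \ref{potenetialthm}), the consequent formula $\bold{L}_v(-n,\infty)=\bold{G}_v^{-n}\bold{L}_v-\bold{W}_v(n)$, and a justification of splitting the bilateral series $\sum_{x\in\mathbb{Z}}z^{-x}\bold{L}_v(x,\infty)$ for the stated range of $z$. Finally, invertibility of $\bold{W}_v(a)$ is needed for every $v\in(0,1]$, not only $v=1$ as in your sketch; via \eqref{wld} this reduces to invertibility of $\bold{G}_v$ and $\bold{L}_v^+(a)$, which should be argued under the standing assumption that $\bold{A}_1$ is non-singular (cf.\ the remark following Corollary \ref{CorOneSideDownkill2}).
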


\begin{Theorem}[Two-sided downward]
	\label{thm:joint}
	For $X_0=x \in [0,a]$, $v \in [0,1]$ and at least $z \in [0,1]$ such that $z \notin \Gamma\left(\bold{G}_v\right)$, we have

	\begin{equation}
		\label{eq:joint}
		\mathbb{E}_x\left(v^{\tau_0^-}z^{-X_{\tau_0^-}}; \tau_0^- < \tau_a^+, J_{\tau_0^-} \right) = z^{-1}\left[\bold{Z}_v(z, x-1) - \bold{W}_v(x)\bold{W}_v(a)^{-1}\bold{Z}_v(z, a-1)\right],
	\end{equation}
where
\begin{linenomath*}\begin{equation}
		\label{eqZMat}
		\bold{Z}_{v}(z,n) = z^{-n} \Bigl[\bold{I} + \sum_{k=0}^n z^k\, \bold{W}_{v}(k) \bigl(\bold{I} - {v}\bold{F}(z) \bigr)   \Bigr],
\end{equation}\end{linenomath*}
with $\bold{Z}_v(z, n) = z^{-n}\bold{I}$ for $n \leqs 0$ and $\bold{Z}_1(z,n) =: \bold{Z}(z, n)$, for all $z$.
\end{Theorem}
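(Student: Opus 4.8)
The plan is to establish Theorem~\ref{thm:joint} by combining the strong Markov property at the downward crossing time with the one-sided upward result (Theorem~\ref{thm:TwoSideUp}) and the explicit form of $\bold{Z}_v(z,n)$. First I would note that on the event $\{\tau_0^- < \tau_a^+\}$ the quantity $X_{\tau_0^-}$ takes values in $\{0,-1,-2,\dots\}$, so $z^{-X_{\tau_0^-}}$ is a genuine power series; the factor $z^{x-1}$ pulled out on the right-hand side reflects the translation invariance of the level process, so it suffices (after shifting) to prove the identity for the strip $[1-x, a-x+1]$ started from $0$, or equivalently to work directly with the generator-type identity for $\bold{Z}_v$.

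The core step is a one-sided (no upper barrier) computation: I would first compute $\mathbb{E}_x\left(v^{\tau_0^-}z^{-X_{\tau_0^-}}; J_{\tau_0^-}\right)$ for the unrestricted MAC, i.e.\ with $a=\infty$. The natural route is an optional-stopping / martingale argument applied to the matrix-valued process $v^n z^{-X_n}$ composed with an appropriate right eigenvector structure, or more robustly, a direct renewal decomposition: condition on the first step $Y_1$, split according to whether the process has already gone $\leqs 0$, and use Theorem~\ref{thm:Lv} together with the occupation-mass representation \eqref{wld} to resum the series. Concretely, writing the overshoot identity through the occupation measure $\bold{L}_v(\cdot,\infty)$ at levels $\leqs 0$ and then applying the $z$-transform formula \eqref{eq:Lv}, one obtains an expression of the form $z^{x-1}\bold{Z}_v(z,x-1)$ after recognizing the partial sums $\sum_{k=0}^{x-1} z^k \bold{W}_v(k)(\bold{I}-v\bold{F}(z))$ in \eqref{eqZMat}; the appearance of $\bold{I}$ in \eqref{eqZMat} is exactly the boundary/initial term, and the telescoping that produces $\bold{Z}_v(z,0)=\bold{I}$ should be checked as a sanity case.

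Having the one-sided (downward, no upper barrier) quantity, I would then reinstate the upper barrier $a$ by the usual two-barrier subtraction: start from $x$, and use the strong Markov property at $\tau_a^+$ on the complementary event $\{\tau_a^+ < \tau_0^-\}$. Since the MAC is upward skip-free, $X_{\tau_a^+}=a$ exactly, so conditioning there and using Theorem~\ref{thm:TwoSideUp} gives
\begin{linenomath*}\begin{equation*}
\mathbb{E}_x\!\left(v^{\tau_0^-}z^{-X_{\tau_0^-}}; J_{\tau_0^-}\right) = \mathbb{E}_x\!\left(v^{\tau_0^-}z^{-X_{\tau_0^-}}; \tau_0^- < \tau_a^+, J_{\tau_0^-}\right) + \bold{W}_v(x)\bold{W}_v(a)^{-1}\,\mathbb{E}_a\!\left(v^{\tau_0^-}z^{-X_{\tau_0^-}}; J_{\tau_0^-}\right).
\end{equation*}\end{linenomath*}
Substituting the one-sided formula $z^{x-1}\bold{Z}_v(z,x-1)$ for the two terms with lower arguments $x$ and $a$ and solving for the restricted expectation yields exactly \eqref{eq:joint}, with the $z^{x-1}$ versus $z^{a-1}$ bookkeeping matching up because the factored-out power in the $a$-term is $z^{a-1}$ while the prefactor $\bold{W}_v(x)\bold{W}_v(a)^{-1}$ carries no $z$-power. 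The main obstacle I anticipate is the rigorous handling of the resummation in the one-sided step: justifying convergence of $\sum_n z^n \bold{W}_v(n)$ and the interchange of sums on the domain $z\notin\Gamma(\bold{G}_v)$, and correctly tracking the overshoot index so that the finite partial sum $\sum_{k=0}^{x-1}$ (rather than an infinite sum) appears — this is where the skip-free structure and the precise definition $\bold{L}^+_v(n)=\bold{L}_v(0,\tau_n^+-1)$ must be used carefully, and where an off-by-one error in the $x-1$ argument is easiest to make; verifying the $x=0$ and $x=a$ boundary cases against $\bold{Z}_v(z,0)=\bold{I}$ gives a good consistency check.
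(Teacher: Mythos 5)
First, a structural point: the paper does not prove Theorem~\ref{thm:joint} at all --- it is imported from \cite{ourpaper} --- so your attempt can only be judged on its own terms, and judged that way it has a genuine gap at its core. Your plan is to first compute the unrestricted one-sided transform $\mathbb{E}_x\bigl(v^{\tau_0^-}z^{-X_{\tau_0^-}};J_{\tau_0^-}\bigr)$, claim it equals $z^{x-1}\bold{Z}_v(z,x-1)$, and then reinstate the upper barrier by the strong Markov subtraction. The subtraction identity you write is correct, but the claimed one-sided formula is false: the paper's own Corollary~\ref{CorOneSideDownkill2} shows the unrestricted transform is $z^{x-1}\bold{Z}_v(z,x-1)$ \emph{minus} the nontrivial correction $z^{x}\bold{W}_v(x)\bold{L}_v^{-1}\bigl(\bold{I}-z\bold{G}_v^{-1}\bigr)^{-1}\bold{L}_v\bigl(v\bold{F}(z)-\bold{I}\bigr)$. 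A sanity check makes this concrete: at $x=1$ your formula asserts the unrestricted transform equals $\bold{Z}_v(z,0)=\bold{I}$, which is impossible for $v<1$ (the entries are strictly sub-stochastic) and cannot be independent of $z$ when jumps of size $\geqs 2$ downward occur. The logical order is also the reverse of what you propose: Corollary~\ref{CorOneSideDownkill2} is \emph{derived from} Theorem~\ref{thm:joint} by letting $a\uparrow\infty$, precisely because the matrix $\lim_{a\to\infty}\bold{W}_v(a)^{-1}\bold{Z}_v(z,a-1)$ can only be identified through that limit; one cannot recover the finite-$a$ statement from the limiting one.

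Second, even granting your one-sided formula, the subtraction does not produce \eqref{eq:joint}: it yields $z^{x-1}\bold{Z}_v(z,x-1)-\bold{W}_v(x)\bold{W}_v(a)^{-1}\,z^{a-1}\bold{Z}_v(z,a-1)$, whereas the stated result has the single prefactor $z^{x-1}$ multiplying \emph{both} terms --- a discrepancy of $z^{x-a}$ in the second term that you dismiss as ``bookkeeping matching up'' but which is exactly where the argument collapses. A workable route is to establish the two-sided identity directly rather than via the one-sided one: either verify that both sides of \eqref{eq:joint}, as functions of $x$, satisfy the same one-step harmonicity recursion obtained by conditioning on the first increment, with the boundary data $\bold{I}$ at $x=0$ and $\bold{0}$ at $x=a$ (using the transform relation \eqref{eqWTran} and the definition \eqref{eqZMat} to check the recursion for $\bold{W}_v$ and $\bold{Z}_v$, in the spirit of Lemma~\ref{Lem:Recursion}), or compute the potential measure of the MAC killed on exiting the strip $[0,a]$ (the two-barrier analogue of Theorem~\ref{potenetialthm}) and sum it against the jump distribution. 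In either case your two-barrier decomposition becomes a \emph{consequence} of the theorem rather than a means of proving it.
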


%\begin{proof}
%	The result follows directly after applying an appropriate shift and explicitly including the killing mechanism (at rate $(1-v) \in [0,1]$) within the result of Corollary 3 in \cite{ourpaper} (see \cite{ourpaper} for details).
%\end{proof}

\noindent Note that a joint transform of this kind was not given in Theorem \ref{thm:TwoSideUp} since, by the upward skip-free property of the MAC, it follows that $X_{\tau^+_a} = a$ a.s..

Finally, we have a corollary for the discounted deficit below zero for the two-sided exit problem, which will play a vital role in the derivation of the dual dividend problem in Section \ref{sec:results}.

	\begin{Corollary}
		\label{cor:deficit}
		For $x \in (0,a]$, $v \in [0,1]$ and at least $z \in [0,1]$ such that $z \notin \Gamma\left(\bold{G}_v\right)$, we have
		\begin{align*}
			\mathbb{E}_x\left(v^{\tau_0^-}X_{\tau_0^-}; J_{\tau_0^-}, \tau_0^- < \tau_a^+ \right) &= \left[\bold{Z}_v(1, x-1) - \bold{W}_v(x)\bold{W}_v(a)^{-1}\bold{Z}_v(1, a-1)\right] \\
			& \hspace{20mm}- \left[\bold{Z}'_v(1, x-1) - \bold{W}_v(x)\bold{W}_v(a)^{-1}\bold{Z}'_v(1, a-1)\right] \\
			&= \bold{Y}_v(x-1) - \bold{W}_v(x)\bold{W}_v(a)^{-1}\bold{Y}_v(a-1),
		\end{align*}
		where $\bold{Z}'_v(1, x) = \frac{d}{dz}\bold{Z}_v(z, x)\big|_{z=1}$ and
		\begin{equation}
			\label{eq:Y}
			\bold{Y}_v(x) = \bold{Z}_v(1, x) - \bold{Z}'_v(1, x),
		\end{equation}
	with $\bold{Y}_v(0) = \bold{Z}_v(1, 0) = \bold{I}$.
	\end{Corollary}
	\begin{proof}
		To prove this result, we first note that
		\begin{equation*}
			\mathbb{E}_x\left(v^{\tau_0^-}X_{\tau_0^-}; J_{\tau_0^-}, \tau_0^- < \tau_a^+ \right) = - \frac{d}{dz}\mathbb{E}_x\left(v^{\tau_0^-}z^{-X_{\tau_0^-}}; J_{\tau_0^-}, \tau_0^- < \tau_a^+ \right) \big|_{z=1}.
		\end{equation*}
		Hence, the result follows by differentiating the right hand side of Eq.\,\eqref{eq:joint} of Theorem \ref{thm:joint}, setting $z=1$ and taking the negative of the resulting expression.
\end{proof}

%\noindent Finally, using a limiting argument as $a\uparrow +\infty$ we obtain the corresponding result for the one-sided downward exit problem.
%	\marginpar{\scriptsize \textcolor{red}{Do we need this Corollary anywhere?}}
%\textcolor{red}{\begin{Corollary}[One-sided downward]
%\label{CorOneSideDownkill2}
%For $X_0 = x \in \mathbb{N}$ and $v \in (0,1]$ and at least $z \in [0,1]$ such that $z \notin \Gamma\left(\bold{G}_v\right)$, we have
%	\marginpar{\scriptsize \textcolor{blue}{HERE!!}}
%\begin{linenomath*}\begin{equation}
%\label{OneSideDown}
%\mathbb{E}_x\Bigl(v^{\tau^-_{0}} z^{-X_{{\tau^-_{0}}}}; J_{_{\tau^-_{0}}} \Bigr) = \textcolor{blue}{z^{-1}}\Bigl[\bold{Z}_v (z, x-1) - z \bold{W}_v(x)\bold{L}^{-1}_v \bigl(\bold{I} -z \bold{G}^{-1}_v\bigr)^{-1}\bold{L}_v (v\bold{F}(z) - \bold{I} )\Bigr],
%\end{equation}\end{linenomath*}
%where $\bold{L}_v := \bold{L}_v(0,\infty)$
%\begin{equation} \label{eq:L}
%	\left(\bold{L}_v\right)_{i,j\in E} = \mathbb{E}\left(\sum_{k=0}^{\infty} v^k \mathbb{I}_{(X_k=0, J_k = j)}\bigg| J_0 = i \right),
%\end{equation}
%denotes the occupation time at the level 0 for an unrestricted MAC (with initial level $x=0$) over an infinite-time horizon.
%\end{Corollary}}

\begin{Remark}
The above results rely on the identification of the $\bold{W}_v$ scale matrix, which can be obtained by inverting the transform given in Eq.\,\eqref{eqWTran}, using standard inversion techniques. However, it is also worth pointing out that in the discrete case, the $Z$-transform can also be `inverted' via coefficient matching. That is, by expanding the right-hand side of \eqref{eqWTran} in terms of an infinite (matrix) summation and matching coefficients, we obtain the $\bold{W}_v$ scale matrix function.
\end{Remark}

\subsection{Time Reversal}
\label{sec:time-rev}

It is well known within the literature of random walks that time-reversal and the corresponding `duality lemma' (see \cite{feller2008introduction} for details) give rise to a number of interesting distributional results. This idea can be easily extended to MACs, although extra care has to be taken regarding the phase transitions of $\{J_n\}_{n \in \mathbb{N}}$ in reversed time.

Let us define the so-called \textit{time-reversed} process by $(\widetilde{X}, \widetilde{J}) := \{(\widetilde{X}_n, \widetilde{J}_n)\}_{n \in \mathbb{N}}$ such that for a fixed $T \in \mathbb{N}$, we have
\begin{linenomath*}\begin{equation}
\widetilde{X}_n :=X_T - X_{T-n} \quad \text{and} \quad  \widetilde{J}_n := J_{T-n}.
\end{equation}\end{linenomath*}
Then, if we assume that $\{J_n\}_{n \in \mathbb{N}}$ has stationary initial distribution, i.e. $J_0 \overset{d}{=}\boldsymbol{\pi}$, it follows that $\{\widetilde{J}_n\}_{n \in \mathbb{N}}$ is again a homogenous Markov chain with transition probability matrix, denoted $\bold{\widetilde{P}}$, given by
\begin{linenomath*}\begin{equation}
		\bold{\widetilde{P} }= \text{diag}(\boldsymbol{\pi})^{-1}\bold{P^\top} \text{diag}(\boldsymbol{\pi}),
\end{equation}\end{linenomath*}
and the time-reversed process $(\widetilde{X}, \widetilde{J})$ is itself a MAC with probability generating matrix $\bold{\widetilde{F}}(z)$, given by
\begin{linenomath*}\begin{align*}\label{F}
\bold{\widetilde{F}}(z) = \sum_{m=-1}^\infty z^{m}\widetilde{\bold{A}}_{-m} &= \sum_{m=-1}^\infty z^{m}\text{diag}(\boldsymbol{\pi})^{-1}\bold{A}^\top_m \text{diag}(\boldsymbol{\pi}) \\
&=  \text{diag}(\boldsymbol{\pi})^{-1}\bold{F}(z)^\top \text{diag}(\boldsymbol{\pi}).
\end{align*}\end{linenomath*}

\begin{Proposition} \label{RMatrix}
\noindent Define $\widetilde{\bold{G}}_v$ to be the time-reversed counterpart of $\bold{G}_v$, such that
\begin{equation*}
	\widetilde{\bold{G}}_v = \mathbb{E}\left(v^{\widetilde{\tau}^+_1}; \widetilde{J}_{\widetilde{\tau}^+_1}\right),
\end{equation*}
where $\widetilde{\tau}^+_1 = \inf\{n \in \mathbb{N}: \widetilde{X}_n \geqslant 1\}$. Then
\begin{linenomath*}\begin{equation}
\bold{R}_v := \emph{diag}(\boldsymbol{\pi})^{-1}\widetilde{\bold{G}}_v^\top \emph{diag}(\boldsymbol{\pi}).
\end{equation}\end{linenomath*}
is the left solution of $\bold{F}(\cdot) = v^{-1}\bold{I}$.
\end{Proposition}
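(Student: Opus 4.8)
\noindent The plan is to deduce that $\bold{R}_v$ solves $\bold{F}(\cdot)=v^{-1}\bold{I}$ \emph{from the left} by transporting, through the time reversal of Section~\ref{sec:time-rev}, the statement of Theorem~\ref{ThmOneUp} applied to the reversed chain, namely that $\widetilde{\bold{G}}_v$ solves $\widetilde{\bold{F}}(\cdot)=v^{-1}\bold{I}$ \emph{from the right}. Here ``right solution'' of $\bold{F}(\cdot)=v^{-1}\bold{I}$ means a matrix $\bold{M}$ with $\sum_{m=-1}^{\infty}\bold{A}_{-m}\bold{M}^{m}=v^{-1}\bold{I}$ (the coefficient matrices standing to the left of the powers, consistently with the first-passage recursion satisfied by $\bold{G}_v$), and ``left solution'' means $\sum_{m=-1}^{\infty}\bold{M}^{m}\bold{A}_{-m}=v^{-1}\bold{I}$; the whole argument is then the observation that conjugation-and-transposition interchanges these two notions while interchanging $\widetilde{\bold{F}}\leftrightarrow\bold{F}$ and $\widetilde{\bold{G}}_v\leftrightarrow\bold{R}_v$.

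\noindent Concretely I would proceed as follows. First, recall from Section~\ref{sec:time-rev} that $(\widetilde{X},\widetilde{J})$ is again a MAC with generating matrix $\widetilde{\bold{F}}(z)=\text{diag}(\boldsymbol{\pi})^{-1}\bold{F}(z)^{\top}\text{diag}(\boldsymbol{\pi})$, hence with step matrices $\widetilde{\bold{A}}_{-m}=\text{diag}(\boldsymbol{\pi})^{-1}\bold{A}_{-m}^{\top}\text{diag}(\boldsymbol{\pi})$ for all $m\geqs -1$; in particular it is still upward skip-free, and $\widetilde{\bold{A}}_{1}=\text{diag}(\boldsymbol{\pi})^{-1}\bold{A}_{1}^{\top}\text{diag}(\boldsymbol{\pi})$ is non-singular because $\bold{A}_{1}$ is, so Theorem~\ref{ThmOneUp} applies verbatim to $(\widetilde{X},\widetilde{J})$ and yields that $\widetilde{\bold{G}}_v$ is the right solution of $\widetilde{\bold{F}}(\cdot)=v^{-1}\bold{I}$, i.e.
\[
\sum_{m=-1}^{\infty}\widetilde{\bold{A}}_{-m}\,\widetilde{\bold{G}}_v^{\,m}=v^{-1}\bold{I},
\]
the series converging on the same spectral domain as in \cite{ourpaper}. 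Next, introduce the map $\Theta(\bold{M}):=\text{diag}(\boldsymbol{\pi})^{-1}\bold{M}^{\top}\text{diag}(\boldsymbol{\pi})$: it is a linear involution and an \emph{anti}-automorphism of the matrix algebra (since $(\bold{M}\bold{N})^{\top}=\bold{N}^{\top}\bold{M}^{\top}$, it also preserves inverses and powers of a single matrix), it fixes $v^{-1}\bold{I}$, and by the formulas above $\widetilde{\bold{A}}_{-m}=\Theta(\bold{A}_{-m})$, hence $\Theta(\widetilde{\bold{A}}_{-m})=\bold{A}_{-m}$, while $\Theta(\widetilde{\bold{G}}_v)=\bold{R}_v$ by the very definition of $\bold{R}_v$. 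Applying $\Theta$ term by term to the displayed identity turns each summand $\widetilde{\bold{A}}_{-m}\widetilde{\bold{G}}_v^{\,m}$ into $\Theta(\widetilde{\bold{G}}_v^{\,m})\,\Theta(\widetilde{\bold{A}}_{-m})=\bold{R}_v^{\,m}\bold{A}_{-m}$ (the case $m=-1$ included, as $\Theta$ preserves inverses), whence
\[
\sum_{m=-1}^{\infty}\bold{R}_v^{\,m}\,\bold{A}_{-m}=v^{-1}\bold{I},
\]
which is exactly the statement that $\bold{R}_v$ is a left solution of $\bold{F}(\cdot)=v^{-1}\bold{I}$; the qualifier ``\emph{the}'' left solution is then obtained by pushing the uniqueness part of Theorem~\ref{ThmOneUp} for $\widetilde{\bold{G}}_v$ through the bijection $\Theta$.

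\noindent I expect the only genuine pitfall to be the bookkeeping one of keeping the side of substitution straight: since $\bold{A}_{-m}$ and the powers of $\bold{G}_v$ (resp.\ $\bold{R}_v$) do not commute, it is precisely transposition that converts ``right solution of $\widetilde{\bold{F}}$'' into ``left solution of $\bold{F}$''; one must also check that the termwise transposition-and-conjugation is legitimate, which follows from absolute convergence of the relevant series on the stated spectral region (as in \cite{ourpaper}). Beyond that, the proof is a one-line consequence of $\widetilde{\bold{F}}(z)=\text{diag}(\boldsymbol{\pi})^{-1}\bold{F}(z)^{\top}\text{diag}(\boldsymbol{\pi})$. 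Alternatively, one could route the same identity through the occupation-mass representations (for instance $\bold{W}_v(n)=\bold{G}_v^{-n}\bold{L}^{+}_v(n)$) together with a time-reversal of occupation measures, but the algebraic route via $\Theta$ is the shortest.
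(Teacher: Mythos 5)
Your proposal is correct and follows essentially the same route as the paper's own proof: apply Theorem \ref{ThmOneUp} to the reversed chain to get $\sum_{m=-1}^{\infty}\widetilde{\bold{A}}_{-m}\widetilde{\bold{G}}_v^{\,m}=v^{-1}\bold{I}$, then transpose and conjugate by $\text{diag}(\boldsymbol{\pi})$ term by term; your map $\Theta$ is just a tidy repackaging of the paper's explicit insertion of $\text{diag}(\boldsymbol{\pi})\,\text{diag}(\boldsymbol{\pi})^{-1}$ between factors. (Incidentally, your relation $\widetilde{\bold{A}}_{-m}=\text{diag}(\boldsymbol{\pi})^{-1}\bold{A}_{-m}^{\top}\text{diag}(\boldsymbol{\pi})$ is the correct one, consistent with $\widetilde{\bold{F}}(z)=\text{diag}(\boldsymbol{\pi})^{-1}\bold{F}(z)^{\top}\text{diag}(\boldsymbol{\pi})$.)
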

\begin{proof}
As $\widetilde{\bold{G}}_{v}$ is the time-reversed counterpart of $\bold{G}_v$ then, from Theorem \ref{ThmOneUp}, it is the right solution of $\widetilde{\bold{F}}(\cdot) = {v}^{-1}\bold{I}$ and satisfies
\begin{align*}
{v}^{-1}\bold{I} = \widetilde{\bold{F}}(\widetilde{\bold{G}}_{v}) =\sum_{m=-1}^\infty \widetilde{\bold{A}}_{-m}\widetilde{\bold{G}}^m_{v}
\end{align*}
Transposing both sides of this expression and multiplying on the left and right by $\text{diag}(\boldsymbol{\pi})^{-1}$ and $\text{diag}(\boldsymbol{\pi})$, respectively, gives
\begin{linenomath*}\begin{align*}
v^{-1}\bold{I} &= \text{diag}(\boldsymbol{\pi})^{-1}\left( \sum_{m=-1}^\infty \widetilde{\bold{A}}_{-m}\widetilde{\bold{G}}^m_{v}  \right)^\top \text{diag}(\boldsymbol{\pi}) \\
&= \text{diag}(\boldsymbol{\pi})^{-1} \sum_{m=-1}^\infty \left(\widetilde{\bold{G}}^\top_{v}\right)^m \widetilde{\bold{A}}^\top_{-m} \text{diag}(\boldsymbol{\pi}) \\
&= \text{diag}(\boldsymbol{\pi})^{-1} \sum_{m=-1}^\infty \left(\widetilde{\bold{G}}^\top_{v}\right)^m  \text{diag}(\boldsymbol{\pi})\,  \text{diag}(\boldsymbol{\pi})^{-1} \widetilde{\bold{A}}^\top_{-m} \text{diag}(\boldsymbol{\pi}) \\
&= \sum_{m=-1}^\infty \bold{R}_{v}^m \bold{A}_{-m},
\end{align*}\end{linenomath*}
which completes the proof.
\end{proof}

\noindent The matrix $\bold{R}_v$ is considered another of the fundamental matrices of a MAC, along with $\bold{G}_v$ and $\bold{L}_v$. In a similar way to $\bold{G}_v$, this matrix can only be obtained explicitly in some special cases but can be approximated using the numerical methods discussed in Remark \ref{rem:numsol}.  A probabilistic interpretation of the matrix $\bold{R}_1=:\bold{R}$, is given in \cite{AsmussenApplied}, where it is interpretted as a matrix with $(i,j)$-th element denoting the expected number of visits to the level $1$, whilst in phase $j \in E$, before first returning to the level $0$, given that $X_0=0$ and $J_0=i \in E$. In fact, using this definition and the Markov additive property, it is possible to prove Proposition \ref{RMatrix}, for $v=1$, directly. (see \cite{AsmussenApplied} and \cite{Ivanovs2019} for more details).

%\textcolor{red}{\begin{Remark}  It follows from Theorem \ref{ThmOneUp} and Proposition \ref{RMatrix}, that both $\mathbb{E}\left(\bold{G}^{-X_n}; J_n \right)$ and $\mathbb{E}\left(\bold{R}^{-X_n}; J_n \right)$ are matrix martingales.
%\end{Remark}}

\begin{Remark} \label{rem:GR}
When reduced to the scalar case, the fundamental matrices $\bold{G}$ and $\bold{R}$ coincide and correspond to the smallest (positive) root of the (discrete) Lundberg equation, known in the literature as Lundberg's coefficient and has been studied in great detail (see \cite{Avram2019} and references therein). As a consequence, we observe that for the scalar random walk, based on the definitions of $\bold{G}$ and $\bold{R}$ given above, that the probability of hitting an upper level is equivalent to the expected number of visits to this level before returning to zero.
\end{Remark}

\section{Main Results}
\label{sec:results}

In this section, we present the main results for the G-S function and expected accumulated discounted dividends until ruin for the regular and dual risk processes defined in Eqs.\,\eqref{eqCDTRM1} and \eqref{eqDual}, respectively.

% \textcolor{red}{It is worth noting that some of the following results have been previously proven by other authors using similar type arguments, which will be highlighted as they appear, but we prove them again here to demonstrate the unifying approach and broad application of the theory presented in this paper.}

The crucial observation leading to the results of this section is that discrete-time risk process, $\{U_n\}_{n \in \mathbb{N}}$, paired with the influencing external Markov chain $\{J_n\}_{n \in \mathbb{N}}$, forms an upward skip-free  MAC, with initial value $U_0 = u \in \mathbb{N}$. To see this, note that the surplus process can alternatively be expressed as
\begin{equation*}
U_n = u + Y_1 +\cdots + Y_n,
\end{equation*}
where the variables $Y_k := 1 - Z_k$ for $k \in \mathbb{N}^+$, form a sequence of conditionally i.i.d.\, variables with distribution depending on $\{J_n\}_{n \in \mathbb{N}}$. This is identical to the form of a MAC as described in Eq.\,\eqref{eq:X}, with $U_0 = u \in \mathbb{N}$ and $\bold{A}_m = \bold{\Lambda}(1-m)$ for $m \leqs 1$. As such, it follows that the ruin times $\tau_0$ and $\widehat{\tau}_0$ for the regular and dual risk processes can be described in terms of downward and upward (by reflection arguments) crossing times of a MAC, respectively.

  We will now use these facts, along with the results given in Section \ref{MACs}, to derive semi-explicit expressions for the G-S function and expected accumulated discounted dividends until ruin for both risk process. In the following, it will be assumed that the natural net profit conditions, as stated in Eq.\,\eqref{eq:drift}, are in force. That is, for the regular risk process we assume that $\kappa'(1) < 0$ such that $U_n \rightarrow + \infty$ and, based on a reflection argument, for the dual risk model we assume $\kappa'(1) > 0$, such that $\widehat{U}_n \rightarrow + \infty$.

\subsection{Gerber-Shiu function - Regular risk process}

It is well known that for spectrally-negative L\'evy processes, the G-S function can be obtained in terms of so-called \textit{q-potential (resolvent) measures} and their corresponding densities, which describe the expected (discounted) time the L\'evy process spends at a given level (see \cite{kyprianou2013gerber} and references therein). Moreover, it has been shown in \cite{ivanovs2014potential}, that the potential measures also exist in the more general MAP framework and can be expressed in terms of the continuous-time occupation densities and scale matrices. In this section, we will show that the G-S function for a discrete-time MAC can be written in terms of so-called \textit{v-potential functions} and derive expressions for these in terms of scale matrices and other fundamental matrices of the MAC.

For $i,j \in E$, let us denote by $H^{(v)}_{ij}(x, \tau_0; u)$, the \textit{v-potential function} of the MAC $\{(U_n, J_n)\}_{n \in \mathbb{N}}$, with initial level $U_0 = u \in \mathbb{N}^+$, killed on exiting from the set of positive integers, which is defined by
\begin{linenomath*}\begin{align}\label{r-potential}
H^{(v)}_{ij}(x, \tau_0; u) &= \mathbb{E}_u \left[\sum_{n=0}^{\infty} v^{n} \mathbb{I}_{\{U_n = x, J_n = j, n < \tau_0 \}} \bigg| J_0 = i \right] \notag \\
& = \sum_{n=0}^{\infty} \mathbb{E}_u \left[ v^{n} \mathbb{I}_{(U_n = x, J_n = j, n < \tau_0)} | J_0 = i \right] \notag \\
& = \sum_{n=0}^{\infty} v^{n} \mathbb{P}_u \left( U_n = x, J_n = j, n < \tau_0 | J_0 = i \right).
\end{align}\end{linenomath*}

\noindent Then, by employing the law of total probability, the G-S function defined in Eq.\,\eqref{GSfunction}, can be expressed in terms of $H^{(v)}_{ij}(\cdot, \tau_0 ; u)$ as shown in the following proposition.

\begin{Proposition}\label{PropPot}
For $i,j\in E$, the G-S functions $\phi_{ij}(u)$, can be expressed as
\begin{equation}\label{mainformulaGerberShiufunction}
\phi_{ij}(u)= v\sum_{k=1}^\infty \sum_{m=k}^\infty
\sum_{l=1}^N w(k, k-m) H^{(v)}_{il}(k, \tau_0; u) \lambda_{lj}(m)
\end{equation}
where $w(\cdot, \cdot)$ is the penalty function and $\lambda_{ij}(\cdot)$ is the phase-dependent claim size distribution.
\end{Proposition}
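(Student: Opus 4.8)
The plan is to condition on the state of the MAC one step before ruin occurs, i.e. on the last epoch $n = \tau_0 - 1$ at which $U_n$ is still positive. First I would decompose the expectation defining $\phi_{ij}(u)$ in Eq.\,\eqref{GSfunction} according to the value of the surplus and the phase at time $\tau_0 - 1$: writing $\{\tau_0 = n+1\} = \{n < \tau_0\} \cap \{U_{n+1} \leqs 0\}$, one has
\begin{linenomath*}\begin{align*}
\phi_{ij}(u) &= \sum_{n=0}^\infty \sum_{k=1}^\infty \sum_{l=1}^N \mathbb{E}_u\!\left[ v^{n+1} \omega(U_n, |U_{n+1}|)\, \mathbb{I}_{(U_n = k,\, J_n = l,\, n < \tau_0)}\, \mathbb{I}_{(U_{n+1} \leqs 0,\, J_{n+1} = j)} \,\big|\, J_0 = i\right],
\end{align*}\end{linenomath*}
where the sum over $k \geqs 1$ exhausts all positive values the surplus can take strictly before ruin.

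Next I would invoke the Markov additive property at the (deterministic) time $n$: conditionally on $\{U_n = k, J_n = l\}$, the increment $U_{n+1} - U_n = Y_{n+1} = 1 - Z_{n+1}$ together with $J_{n+1}$ is independent of $\mathcal{F}_n$ and has law governed by $\bold{\Lambda}(\cdot)$. The event $\{U_{n+1} \leqs 0\}$ becomes $\{1 - Z_{n+1} \leqs -k\}$, i.e. $\{Z_{n+1} = m\}$ for some $m \geqs k+1$; on this event $U_{n+1} = k+1-m$, so $|U_{n+1}| = m - k - 1$ and $U_n = k$. Thus the inner conditional expectation factorises as
\begin{linenomath*}\begin{equation*}
v^{n+1}\, \mathbb{P}_u(U_n = k, J_n = l, n < \tau_0 \mid J_0 = i)\, \sum_{m=k+1}^\infty \omega(k, m-k-1)\, \lambda_{lj}(m).
\end{equation*}\end{linenomath*}
Pulling out one factor of $v$, summing the remaining $v^n \mathbb{P}_u(\cdots)$ over $n$ recognises exactly the $v$-potential function $H^{(v)}_{il}(k, \tau_0; u)$ from Eq.\,\eqref{r-potential}, and interchanging the order of summation (justified by non-negativity of $\omega$ and Tonelli) yields
\begin{linenomath*}\begin{equation*}
\phi_{ij}(u) = v \sum_{k=1}^\infty \sum_{m=k+1}^\infty \sum_{l=1}^N \omega(k, m-k-1)\, H^{(v)}_{il}(k, \tau_0; u)\, \lambda_{lj}(m).
\end{equation*}\end{linenomath*}
This matches the claimed Eq.\,\eqref{mainformulaGerberShiufunction} up to the bookkeeping of the summation index and the arguments of $w$; I would reconcile the index conventions (the paper writes $\sum_{m=k}^\infty w(k,k-m)$, which under the convention $\omega(k, |U_{\tau_0}|)$ with the boundary case $|U_{\tau_0}|$ read off appropriately, or a shift $m \mapsto m+1$ in the claim index, corresponds to the expression derived above) so that the statement comes out verbatim.

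The only genuinely delicate point is the interchange of the (triple) infinite sums and the expectation/sum over $n$, and the implicit claim that the series converges: here I would lean on non-negativity of the penalty function $\omega$ (stated in the text, $\omega: \mathbb{N}^+\times\mathbb{N}\to\mathbb{R}^+$) together with the finiteness of $\phi_{ij}(u)$ to apply Tonelli's theorem freely, so no uniform integrability argument is needed. A secondary bookkeeping subtlety is making sure the decomposition of $\{\tau_0 < \infty\}$ into the disjoint events $\{\tau_0 = n+1\}$, $n \geqs 0$, is exhaustive and that the case $U_n = k$ with $k \geqs 1$ together with a claim of size exactly $k$ (giving $U_{n+1} = 1 > 0$, hence no ruin) versus $k+1$ (giving $U_{n+1} = 0$, which does count as ruin under the convention $\tau_0 = \inf\{n: U_n \leqs 0\}$) is handled consistently with the definition of $\tau_0$; this is where the index range $m \geqs k$ versus $m \geqs k+1$ must be pinned down, and it is the main place an off-by-one error could creep in.
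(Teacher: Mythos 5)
Your proposal is correct and follows essentially the same route as the paper: a law-of-total-probability decomposition over the pre-ruin epoch, level and phase, factorisation via the Markov (additive) property, and recognition of the inner sum over $n$ as the $v$-potential function $H^{(v)}_{il}(k,\tau_0;u)$. The off-by-one you flag at the end is real but is a slip in the paper rather than in your argument: the paper's own proof writes $U_n = k-m$ and $Z_n = m$ in consecutive lines, which are inconsistent by one unit since $U_n = U_{n-1}+1-Z_n$; your bookkeeping (sum over $m \geqs k+1$, penalty $\omega(k, m-k-1)$, weight $\lambda_{lj}(m)$) is the correct one, and the second argument of $w$ in the displayed statement should in any case read $m-k$ rather than $k-m$ to match $|U_{\tau_0}|$.
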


\begin{proof}
Recalling the definition of the G-S functions $\phi_{ij}(u)$ from Eq.\,\eqref{GSfunction}, the law of total probability gives
\begin{linenomath*}\begin{align*}
\phi_{ij}(u) &= \sum_{n = 1}^\infty \sum_{k = 1}^\infty \sum_{m = k}^\infty \sum_{l = 1}^N v^{n}w(k, m-k) \\
&\hspace{10mm} \times \mathbb{P}_u\left(\tau_0 = n, U_{n-1} = k, U_n = k-m, J_n = j, J_{n-1} = l | J_0 = i \right) \\
&= \sum_{n = 1}^\infty \sum_{k = 1}^\infty \sum_{m = k}^\infty \sum_{l = 1}^N v^{n}w(k, m-k) \\
&\hspace{10mm} \times \mathbb{P}_u\left(\underline{U}_{n-1} > 0, U_{n-1} = k, J_{n-1} = l | J_0 = i \right)\mathbb{P}\left(Z_n = m, J_n = j |J_{n-1} = l \right) \\
&= \sum_{n = 1}^\infty \sum_{k = 1}^\infty \sum_{m = k}^\infty \sum_{l = 1}^N v^{n}w(k, m-k) \\
&\hspace{10mm} \times \mathbb{P}_u\left(\underline{U}_{n-1} > 0, U_{n-1} = k, J_{n-1} = l | J_0 = i \right)\lambda_{lj}(m)
\end{align*}\end{linenomath*}
where $\underline{U}_{n}:= \inf_{0 \leqs k \leqs n}\{ U_k\}$ and the second equality follows from the Markov property of $\{J_n\}_{n \in \mathbb{N}}$ along with the conditional i.i.d.\,property of the claim sizes. Then, after some re-arranging we obtain
\begin{linenomath*}\begin{align*}
\phi_{ij}(u) &=  v\sum_{k = 1}^\infty \sum_{m = k}^\infty \sum_{l = 1}^N w(k, m-k) \\
&\hspace{10mm} \times \sum_{n = 1}^\infty v^{(n-1)}\mathbb{P}_u\left(\underline{U}_{n-1} > 0, U_{n-1} = k, J_{n-1} = l |  J_0 = i \right)\lambda_{lj}(m),
\end{align*}\end{linenomath*}
from which the result follows by noting that the summation in the last line is equivalent to the $v$-potential function $H^{(v)}_{il}(k, \tau_0; u)$ defined in Eq.\,\eqref{r-potential}.
\end{proof}

\noindent The result of Proposition \ref{PropPot} provides a representation for the G-S function as long as we can identify the $v$-potential measures $H^{(v)}_{ij}(\cdot, \tau_0;u)$, for all $i,j \in E$. Using a similar methodology to \cite{ivanovs2014potential}, in the next theorem we show that the $v$-potential functions can actually be expressed in terms of the scale matrix $\bold{W}_v(\cdot)$ and fundamental matrices associated to the MAC $\{(U_n,J_n)\}_{n \in \mathbb{N}}$.

\begin{Theorem}
\label{potenetialthm}
Let, $\bold{H}^{(v)}(x, \tau_0;u)$ denote the $N$-dimensional  square matrix with $(i,j)$-th element given by the $v$-potential function $H^{(v)}_{ij}(x, \tau_0; u)$ for $i,j \in E$. Then, we have
\begin{equation*}
\bold{H}^{(v)}(x, \tau_0;u)=\bold{W}_{v}(u)\bold{R}_{v}^{x} -\bold{W}_{v}(u-x),
\end{equation*}
where $\bold{R}_{v}$ is the left solution of $\bold{F}(\cdot) = v^{-1}\bold{I}$.
\end{Theorem}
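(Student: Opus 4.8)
The plan is to decompose the killed $v$-potential $\bold{H}^{(v)}(x,\tau_0;u)$ into an unrestricted part minus a correction term that accounts for paths which have already crossed below $0$ (equivalently, below the hitting location of level $0$ from above) before reaching $x$. Write the potential $\bold{H}^{(v)}(x,\tau_0;u)$, which counts discounted visits to level $x$ (in every phase) strictly before $\tau_0^-$, as the full (no killing) occupation time of level $x$ for the MAC started at $u$, minus the discounted expected occupation of $x$ accumulated on the event that the process has already down-crossed $0$. Using the strong Markov / Markov additive property at $\tau_0^-$, the second contribution factorises: on $\{\tau_0^- < \tau^{\{x\}}\}$, after down-crossing $0$ (landing at some level $X_{\tau_0^-} = -\ell \le 0$ in some phase), the process must climb back up to $x$ to continue contributing, and this climb is an upward passage governed by $\bold{G}_v$.

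The concrete steps I would carry out, in order, are as follows. First, identify the unrestricted occupation term: the discounted expected time spent at level $x$ by the MAC started from $u$ (with no killing) is, by translation invariance, $\bold{L}_v(x,\infty) = \bold{L}_v(x-u,\infty)$ — more precisely the occupation measure at level $x$ starting from $u$ is $\bold{L}_v(0,\infty)$ pre- and post-composed appropriately; here I would use Theorem~\ref{thm:Lv} together with the alternative representation $\bold{W}_v(n) = \bold{G}_v^{-n}\bold{L}^+_v(n)$ from Theorem~\ref{thm:TwoSideUp} to re-express everything in terms of $\bold{W}_v$. Second, condition on the first down-crossing of $0$: applying the strong Markov property at $\tau_0^-$ and using Corollary~\ref{CorOneSideDownkill2} (one-sided downward, with the strip sent to infinity) to describe the joint law of $(\tau_0^-, X_{\tau_0^-}, J_{\tau_0^-})$ started from $u$. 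Third, from the overshoot level $X_{\tau_0^-} \le 0$, the contribution to visiting $x>0$ requires an upward passage to $x$; since the MAC is upward skip-free this passage hits every intermediate level, and by Theorem~\ref{ThmOneUp} its transform is a power of $\bold{G}_v$, after which the remaining occupation of $x$ is again an unrestricted occupation term. Fourth, assemble these pieces: the unrestricted occupation from $u$ equals the killed potential plus (down-cross to $0$) $\times$ (climb from overshoot back to $x$) $\times$ (unrestricted occupation of $x$), and solve for $\bold{H}^{(v)}$. Fifth, algebraically simplify the resulting matrix expression using $\bold{R}_v$ as the left solution of $\bold{F}(\cdot)=v^{-1}\bold{I}$ (Proposition~\ref{RMatrix}) and the transform identity $\sum_n z^n\bold{W}_v(n) = (v\bold{F}(z)-\bold{I})^{-1}$, extracting coefficients to land on the clean closed form $\bold{W}_v(u)\bold{R}_v^x - \bold{W}_v(u-x)$. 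I would cross-check by evaluating at $x = u$ (where $\bold{W}_v(u-x)=\bold{W}_v(0)=\bold{0}$, giving $\bold{W}_v(u)\bold{R}_v^u$, the occupation of the starting level killed at $\tau_0^-$) and at $x > u$ or $x \le 0$ for consistency with the support of the potential.

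The main obstacle I anticipate is the bookkeeping of the overshoot below $0$ and its interaction with the $\bold{R}_v$ matrix: the second term must consolidate a sum over all possible overshoot depths $\ell \ge 0$ of the form (joint transform of landing at $-\ell$) $\times$ ($\bold{G}_v^{\,x+\ell}$) $\times$ (occupation of $x$), and showing that this telescopes to exactly $\bold{W}_v(u)\bold{R}_v^x$ minus what is absorbed into the $\bold{W}_v(u-x)$ term requires careful use of the left-eigen relation $\sum_{m\ge -1}\bold{R}_v^m\bold{A}_{-m} = v^{-1}\bold{I}$ and the defining transform of $\bold{W}_v$. Equivalently — and this is likely the cleanest route — one can bypass the explicit overshoot sum by working entirely at the level of $z$-transforms: take $\sum_x z^{-x}\bold{H}^{(v)}(x,\tau_0;u)$, recognise the unrestricted part as a resolvent via Theorem~\ref{thm:Lv}, recognise the correction as a product of a downward-exit transform (Corollary~\ref{CorOneSideDownkill2}) and a resolvent, and then verify the claimed formula by checking that $\sum_x z^{-x}[\bold{W}_v(u)\bold{R}_v^x - \bold{W}_v(u-x)]$ produces the same rational-in-$z$ matrix function; the delicate point there is controlling the domain of $z$ (staying off $\Gamma(\bold{G}_v)$ and $\Gamma(\bold{R}_v)$) so that the geometric series $\sum_x z^{-x}\bold{R}_v^x$ converges and the transform inversion is justified.
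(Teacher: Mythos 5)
Your decomposition --- unrestricted occupation of level $x$ started from $u$, minus the occupation accumulated after ruin, factored at the ruin time by the strong Markov property --- is exactly the paper's strategy, and your consistency check at $x=u$ is sound. But the two places you flag as ``obstacles'' are precisely where the proof lives, and your plan for them does not go through as stated. First, the overshoot sum over $\ell$ is unnecessary, and evaluating it from Corollary \ref{CorOneSideDownkill2} by right-substituting $z=\bold{G}_v$ is awkward (the closed form there is a two-sided product in $z$, so matrix substitution is not literal plug-in). The clean observation, which the paper uses, is that by upward skip-freeness every path from $X_{\tau_0^-}\leqs 0$ back to $x\geqs 1$ must pass through level $0$ exactly, so the post-ruin occupation of $x$ factors as $\mathbb{E}_u\bigl(v^{\tau^{\{0\}}};J_{\tau^{\{0\}}}\bigr)\bold{G}_v^{\,x}\bold{L}_v$ with no overshoot sum at all; what one then needs is the hitting-time identity $\mathbb{E}\bigl(v^{\tau^{\{-n\}}};J_{\tau^{\{-n\}}}\bigr)=\bold{G}_v^{-n}-\bold{W}_v(n)\bold{L}_v^{-1}$, which the paper extracts from the occupation split $\bold{L}_v=\bold{L}_v^+(n)+\bold{G}_v^{n}\,\mathbb{E}\bigl(v^{\tau^{\{-n\}}};J_{\tau^{\{-n\}}}\bigr)\bold{L}_v$ together with Eq.\,\eqref{wld}. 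You gesture at Eq.\,\eqref{wld} but never state this identity, and without it the step ``re-express everything in terms of $\bold{W}_v$'' is not available.

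Second, and more seriously: carrying the assembly through yields $\bold{H}^{(v)}(x,\tau_0;u)=\bold{W}_v(u)\bold{L}_v^{-1}\bold{G}_v^{\,x}\bold{L}_v-\bold{W}_v(u-x)$, so the entire remaining content is the conjugation identity $\bold{L}_v^{-1}\bold{G}_v\bold{L}_v=\bold{R}_v$. Your plan to finish by ``using $\bold{R}_v$ as the left solution of $\bold{F}(\cdot)=v^{-1}\bold{I}$ and extracting coefficients'' cannot produce this: conjugating the right-solution equation $\sum_{m}\bold{A}_{-m}\bold{G}_v^{m}=v^{-1}\bold{I}$ by $\bold{L}_v$ gives a right-solution equation for the conjugated kernel $\bold{L}_v^{-1}\bold{A}_{-m}\bold{L}_v$, not the left-solution equation $\sum_{m}\bold{R}_v^{m}\bold{A}_{-m}=v^{-1}\bold{I}$, so the defining property of $\bold{R}_v$ alone does not identify it with $\bold{L}_v^{-1}\bold{G}_v\bold{L}_v$. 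The paper closes this gap by time reversal: on one hand $\bold{L}_v(1,\infty)=\bold{G}_v\bold{L}_v$ by the strong Markov property, and on the other $\bold{L}_v(x,\infty)=\text{diag}(\boldsymbol{\pi})^{-1}\widetilde{\bold{L}}_v(x,\infty)^\top\text{diag}(\boldsymbol{\pi})$, which combine to give $\bold{L}_v^{-1}\bold{G}_v\bold{L}_v=\text{diag}(\boldsymbol{\pi})^{-1}\widetilde{\bold{G}}_v^\top\text{diag}(\boldsymbol{\pi})=\bold{R}_v$ by Proposition \ref{RMatrix}. This time-reversal step is the missing idea in your proposal; the rest is a correct outline of the paper's argument.
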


\begin{proof}
	
To begin, recall the definition of $\bold{L}_v^+(n)$ defined in Theorem \ref{thm:TwoSideUp} and let $\bold{L}_v := \bold{L}_v(0,\infty)$ denote the occupation time at the level 0 for an unrestricted MAC (with initial level $x=0$) over an infinite-time horizon. Then, by application of the strong Markov property and Markov additive property, it follows that
\begin{linenomath*}\begin{align}
\label{StrongMark}
\bold{L}_v &= \bold{L}^+_v(n) + \mathbb{E}\left( v^{\tau^+_n};J_{\tau^+_n}\right)\mathbb{E}\left( v^{\tau^{\{-n\}}};J_{\tau^{\{-n\}}}\right)\bold{L}_v \notag  \\
& =  \bold{L}^+_v(n) + \bold{G}^n_v \, \mathbb{E}\left( v^{\tau^{\{-n\}}};J_{\tau^{\{-n\}}}\right)\bold{L}_v,
\end{align}\end{linenomath*}
where $\tau^{\{k\}}$ denotes the hitting time defined in Eq.\eqref{eq:hitting}. Re-arranging the above expression gives
\begin{linenomath*}\begin{align*}
\label{keyequation}
\mathbb{E}\left( v^{\tau^{\{-n\}}};J_{\tau^{\{-n\}}}\right) &= \bold{G}^{-n}_v\left[\bold{L}_v - \bold{L}^+_v(n))\right]\bold{L}_v^{-1} \notag \\
&= \bold{G}^{-n}_v - \bold{G}^{-n}_v\bold{L}^+_v(n)\bold{L}_v^{-1} \notag \\
&= \bold{G}^{-n}_v - \bold{W}_v(n)\bold{L}_v^{-1},
\end{align*}\end{linenomath*}
where, in the last equality, we have used the form of the scale matrix given in Eq.\,\eqref{wld} of Theorem \ref{thm:TwoSideUp} and that fact that $\bold{L}_v^{-1}$ exists since we assume that $\bold{A}_{1}$ is invertible (see Remark 5 of \cite{ourpaper}). The above identity, along with Eq.\,\eqref{GvMatrix}, shows that for any $n \in \mathbb{Z}$, we have
\begin{linenomath*}\begin{equation}\label{newidentity}
\mathbb{E}\left(v^{\tau^{\{n\}}}\right)=\bold{G}_{v}^{n} -\bold{W}_v(-n)\bold{L}_v^{-1},
\end{equation}\end{linenomath*}
where we have used the fact that $\bold{W}_v(-n)=\bold{0}$ for $n \in  \mathbb{N}$.

%Now, let us introduce the matrix $\bold{L}^-_v(m, n)$, which denotes the expected discounted number of times the level process of a MAC, with initial value 0, visits level $m$ before down-crossing level $-n<0$, , i.e. before $\tau_{-n}^-$, with $(i,j)$-th element defined as
%\begin{linenomath*}\begin{equation*}
%\left(\bold{L}^-_v(m,u)\right)_{i,j \in E}=\mathbb{E}\left(\sum_{k = %0}^{\tau_{-u}^-} v^k 1_{\left( U_k = m, J_k = j \right)}\bigg|J_0=i\right).
%\end{equation*}\end{linenomath*}

Now, by recalling the definition of the $v$-potential function defined in Eq.\,\eqref{r-potential}, we can apply a similar idea to that of Eq.\,\eqref{StrongMark}, to show that for $k, u \in \mathbb{N}^+$
\begin{linenomath*}\begin{align}
\bold{H}^{(v)}(k,\tau_0;u) &= \mathbb{E}\left(v^{\tau^{\{k-u\}}}; J_{\tau^{\{k-u\}}}\right) \bold{L}_v -\mathbb{E}\left(v^{\tau^{\{-u\}}}; J_{\tau^{\{-u\}}}\right)\mathbb{E}\left(v^{\tau^{\{k\}}}; J_{\tau^{\{k\}}}\right)\bold{L}_v  \notag \\
& = \left[\mathbb{E}\left(v^{\tau^{\{k-u\}}}; J_{\tau^{\{k-u\}}}\right) -\mathbb{E}\left(v^{\tau^{\{-u\}}}; J_{\tau^{\{-u\}}}\right) \bold{G}^k_v\right] \bold{L}_v .
\end{align}\end{linenomath*}
Finally, by substituting the identity in Eq.\,\eqref{newidentity} into the right hand side of the above expression, we have
\begin{linenomath*}\begin{equation*}
\bold{H}^{(v)}(k,\tau_0;u) =  \bold{W}_{v}(u) \bold{L}_{v}^{-1}\bold{G}_{v}\bold{L}_{v} - \bold{W}_{v}(u-k).
\end{equation*}\end{linenomath*}
and the proof is complete once we show that
\begin{linenomath*}\begin{equation}\label{GR}
\bold{L}_{v}^{-1}\bold{G}_{v}\bold{L}_{v} = \bold{R}_{v}.
\end{equation}\end{linenomath*}
To do this, first observe that by the strong Markov property, it follows that  $\bold{L}_v(1,\infty) = \bold{G}_v\bold{L}_v$  and thus
\begin{linenomath*}\begin{align}\label{3}
		\bold{L}_v^{-1} \bold{G}_{v}\bold{L}_v =\bold{L}_v^{-1} \bold{L}_v\left(1, \infty \right).
\end{align}\end{linenomath*}
Now, if we define $\widetilde{\bold{L}}_v\left(x, \infty \right)$ to be the time-reversed counterpart of $\bold{L}_v\left(x, \infty \right)$, then from Theorem \ref{thm:Lv} it is easy to see that
\begin{equation*}
\bold{L}_v(x, \infty) = \text{diag}(\boldsymbol{\pi})^{-1} \widetilde{\bold{L}}_v\left(x, \infty \right)^\top  \text{diag}(\boldsymbol{\pi})
\end{equation*}
for all $x\in \mathbb{Z}$ and thus
\begin{linenomath*}\begin{align}\label{1}
\bold{L}_v\left(1, \infty \right)&= \text{diag}(\boldsymbol{\pi})^{-1} \widetilde{\bold{L}}_v\left(1, \infty \right)^\top  \text{diag}(\boldsymbol{\pi}) \notag \\
&= \text{diag}(\boldsymbol{\pi})^{-1} \left(\widetilde{\bold{G}}_{v}\widetilde{\bold{L}}_v \right)^\top  \text{diag}(\boldsymbol{\pi}) \notag \\
&= \text{diag}(\boldsymbol{\pi})^{-1} \widetilde{\bold{L}}_v ^\top \widetilde{\bold{G}}_{v}^\top  \text{diag}(\boldsymbol{\pi}).
\end{align}\end{linenomath*}
Similarly, it follows that $\bold{L}_v = \text{diag}(\boldsymbol{\pi})^{-1} \widetilde{\bold{L}}_v^\top  \text{diag}(\boldsymbol{\pi})$, such that
\begin{linenomath*}\begin{equation}\label{2}
\bold{L}_v^{-1} = \text{diag}(\boldsymbol{\pi})^{-1} \left(\widetilde{\bold{L}}_v^\top\right)^{-1}  \text{diag}(\boldsymbol{\pi}).
\end{equation}\end{linenomath*}
Finally, combining Eq's.\,\eqref{3}, \eqref{1} and \eqref{2}, yields
\begin{linenomath*}\begin{equation*}
\bold{L}_v^{-1} \bold{G}_{v}\bold{L}_v  = \text{diag}(\boldsymbol{\pi})^{-1} \widetilde{\bold{G}}_v^\top  \text{diag}(\boldsymbol{\pi}) = \bold{R}_v,
\end{equation*}\end{linenomath*}
which, by Proposition \ref{RMatrix} is the left solution to the equation $\bold{F}(\cdot) = {v}^{-1}\bold{I}$, as required.
\end{proof}

\noindent Combining the results from Proposition \ref{PropPot} and Theorem \ref{potenetialthm} above, leads to an expression for the G-S function in terms of the scale matrices and is presented in the following theorem.

\begin{Theorem}
For $u \in \mathbb{N}$, the G-S function $\phi(u)$ is given by
\begin{linenomath*}\begin{equation}\label{GSMAC1}
\phi(u) = \boldsymbol{\pi}^\top \bold{\Phi}(u) \boldsymbol{e},
\end{equation}\end{linenomath*}
where

\begin{linenomath*}\begin{equation}\label{GSMAC2}
\bold{\Phi}(u) = v\sum_{k=1}^\infty \sum_{m = k}^\infty w(k,k-m)\left( \bold{W}_{v}(u)\bold{R}_{v}^{k} -\bold{W}_{v}(u-k) \right) \bold{\Lambda}(m).
\end{equation}\end{linenomath*}
\end{Theorem}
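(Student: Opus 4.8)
The plan is to combine the two results that precede this theorem in a routine way. First I would recall from Proposition \ref{PropPot} that for $i,j \in E$ the scalar G-S function entries satisfy
\begin{linenomath*}\begin{equation*}
\phi_{ij}(u)= v\sum_{k=1}^\infty \sum_{m=k}^\infty \sum_{l=1}^N w(k, k-m)\, H^{(v)}_{il}(k, \tau_0; u)\, \lambda_{lj}(m),
\end{equation*}\end{linenomath*}
and observe that the inner sum over $l$ is precisely the $(i,j)$-th entry of the matrix product $\bold{H}^{(v)}(k, \tau_0; u)\bold{\Lambda}(m)$. Writing this in matrix form gives
\begin{linenomath*}\begin{equation*}
\bold{\Phi}(u) = v\sum_{k=1}^\infty \sum_{m=k}^\infty w(k, k-m)\, \bold{H}^{(v)}(k, \tau_0; u)\, \bold{\Lambda}(m).
\end{equation*}\end{linenomath*}

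Next I would substitute the closed-form expression for the $v$-potential matrix obtained in Theorem \ref{potenetialthm}, namely $\bold{H}^{(v)}(k, \tau_0; u) = \bold{W}_v(u)\bold{R}_v^{k} - \bold{W}_v(u-k)$, directly into the display above. Since $\bold{\Lambda}(m) = \bold{A}_{1-m}$ corresponds to the claim-size matrix of the MAC $\{(U_n, J_n)\}$, this yields exactly Eq.\,\eqref{GSMAC2}. The unconditional G-S function $\phi(u) = \boldsymbol{\alpha}^\top \bold{\Phi}(u)\boldsymbol{e}$ then follows from the definition in Eq.\,\eqref{GSfunction}; here one takes the initial distribution to be the stationary law $\boldsymbol{\pi}$ (consistent with the standing assumption, used throughout Section \ref{MACs} and in particular in Proposition \ref{RMatrix} and Theorem \ref{potenetialthm}, that $J_0 \overset{d}{=} \boldsymbol{\pi}$ so that the time-reversal identities hold), giving Eq.\,\eqref{GSMAC1}.

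The only genuine point requiring care — and the step I would flag as the main obstacle — is the interchange of the (double) summation over $k$ and $m$ with the matrix operations and, implicitly, with the expectation defining $H^{(v)}(k,\tau_0;u)$. This is justified because all terms are non-negative (the penalty $\omega$ maps into $\mathbb{R}^+$, the potential entries are non-negative, and $\bold{\Lambda}(\cdot)$ has non-negative entries summing to a stochastic matrix), so Tonelli's theorem applies and the rearrangement in Proposition \ref{PropPot} propagates to the matrix identity without any convergence issue beyond finiteness of $\phi_{ij}(u)$, which is guaranteed under the net profit condition $\kappa'(1)<0$ in force throughout Section \ref{sec:results}. Everything else is bookkeeping: matching the index $m-k$ in the penalty argument of Proposition \ref{PropPot} with the $k-m$ appearing in the statement (a sign convention, since $U_{\tau_0} = k-m \leqs 0$ and $|U_{\tau_0}| = m-k$), and identifying $\bold{\Lambda}(m)$ with the claim mass matrix so that the sum $\sum_l H^{(v)}_{il}(k,\tau_0;u)\lambda_{lj}(m)$ collapses into matrix multiplication.
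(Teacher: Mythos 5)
Your proposal is correct and follows exactly the route the paper takes: the paper presents this theorem as an immediate consequence of combining Proposition \ref{PropPot} with Theorem \ref{potenetialthm}, rewriting the sum over $l$ as the matrix product $\bold{H}^{(v)}(k,\tau_0;u)\bold{\Lambda}(m)$ and substituting the closed form of the potential matrix. Your additional remarks on Tonelli, the $k-m$ versus $m-k$ convention in the penalty argument, and the appearance of $\boldsymbol{\pi}$ in place of $\boldsymbol{\alpha}$ are sensible bookkeeping observations that the paper leaves implicit.
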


\begin{Corollary}
For $u \in \mathbb{N}$, the probability of ruin $\psi(u)$ is given by
\begin{linenomath*}\begin{equation}\label{RuinMAC}
\psi(u) = \boldsymbol{\pi}^\top \bold{\Psi}(u) \boldsymbol{e},
\end{equation}\end{linenomath*}
with
\begin{linenomath*}\begin{equation}\label{RuinMAC1}
\bold{\Psi}(u) = \sum_{k=1}^\infty \sum_{m = k}^\infty \left( \bold{W}(u)\bold{R}^{k} -\bold{W}(u-k) \right) \bold{\Lambda}(m).
\end{equation}\end{linenomath*}
\end{Corollary}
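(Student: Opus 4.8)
The plan is to read off the corollary directly from the preceding theorem by specialising the penalty function and discount factor. Recall from the discussion following \eqref{GSfunction} that when $v = 1$ and $\omega(\cdot,\cdot)\equiv 1$ the Gerber--Shiu matrix $\bold{\Phi}(u)$ collapses to the matrix of conditional ruin probabilities with $(i,j)$-th entry $\psi_{ij}(u)$ defined in \eqref{eq:ruin}, so that $\phi(u)$ becomes $\psi(u) = \boldsymbol{\pi}^\top\bold{\Psi}(u)\boldsymbol{e}$ with $\bold{\Psi}(u)$ the matrix of the $\psi_{ij}(u)$. Hence it suffices to evaluate the right-hand side of \eqref{GSMAC2} at $v = 1$ and $w(k,k-m) = 1$ for all admissible $k,m$.

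Making the substitution is immediate once the notational conventions of Section \ref{MACs} are invoked: by Theorem \ref{thm:TwoSideUp} we have $\bold{W}_1(\cdot) = \bold{W}(\cdot)$, and by Proposition \ref{RMatrix} together with $\bold{G}_1 = \bold{G}$ the left solution of $\bold{F}(\cdot) = \bold{I}$ is $\bold{R}_1 = \bold{R}$. Thus each summand $\bigl(\bold{W}_v(u)\bold{R}_v^{k} - \bold{W}_v(u-k)\bigr)\bold{\Lambda}(m)$ in \eqref{GSMAC2} becomes $\bigl(\bold{W}(u)\bold{R}^{k} - \bold{W}(u-k)\bigr)\bold{\Lambda}(m)$, and dropping the factor $v = 1$ and the weights $w(k,k-m)=1$ yields exactly the expression \eqref{RuinMAC1} for $\bold{\Psi}(u)$.

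The only points deserving a line of justification are the convergence of the double series and the legitimacy of writing it in the stated order. Here one uses the standing net-profit condition $\kappa'(1) < 0$: it makes the level process drift to $+\infty$, so $\bold{G}$ is stochastic while its time-reversed counterpart $\widetilde{\bold{G}}$, and hence $\bold{R}$, is strictly substochastic with spectral radius strictly below one, whence $\sum_{k\geqs 1}\bold{R}^{k}$ converges. The contribution involving $\bold{W}(u-k)$ is a finite sum, since $\bold{W}(u-k) = \bold{0}$ for $k > u$ by $\bold{W}_v(0) = \bold{0}$ and $\bold{W}_v(-n) = \bold{0}$; and since $\sum_{m\geqs 0}\bold{\Lambda}(m) = \bold{P}$ is stochastic with $\mathbb{E}(Z_1\mathbb{I}_{(J_1=j)}\mid J_0 = i) < \infty$, the tail matrices $\sum_{m\geqs k}\bold{\Lambda}(m)$ are summable in $k$. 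As every term is entrywise non-negative, Tonelli's theorem licenses the interchange of summations. The main ``obstacle'' is therefore purely bookkeeping --- verifying these finiteness statements --- there being no probabilistic content in the corollary beyond what the preceding theorem already supplies.
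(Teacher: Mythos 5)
Your overall route is exactly the paper's: the corollary carries no proof there precisely because it is the specialisation $v=1$, $\omega\equiv 1$ of the preceding theorem, under which $\bold{\Phi}(u)$ reduces to the matrix of ruin probabilities of Eq.\,\eqref{eq:ruin}, $\bold{W}_1(\cdot)=\bold{W}(\cdot)$ and $\bold{R}_1=\bold{R}$. That part of your argument is fine and is all the paper itself supplies.

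The convergence discussion you added, however, contains a genuine error. Under the net profit condition $\kappa'(1)<0$ the level process drifts to $+\infty$, but so does its time reversal: the stationary drift in Eq.\,\eqref{eq:drift} is computed under $\boldsymbol{\pi}$ and is unchanged by time reversal. Hence $\widetilde{\bold{G}}$ is stochastic, not strictly substochastic, and $\bold{R}=\text{diag}(\boldsymbol{\pi})^{-1}\widetilde{\bold{G}}^\top\text{diag}(\boldsymbol{\pi})$ satisfies $\boldsymbol{\pi}^\top\bold{R}=\boldsymbol{\pi}^\top$, so its spectral radius equals $1$ and $\sum_{k\geqs 1}\bold{R}^{k}$ diverges. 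The scalar case of Remark \ref{rem:GR} makes this transparent: for an upward-drifting walk one has $R=G=1$. The series \eqref{RuinMAC1} nevertheless converges, but for the other reason you half-state: the entries of $\bold{R}^{k}$ are uniformly bounded in $k$ (from $\boldsymbol{\pi}^\top\bold{R}^{k}=\boldsymbol{\pi}^\top$ and nonnegativity, $(\bold{R}^{k})_{ij}\leqs \pi_j/\pi_i$), while the tail matrices $\sum_{m\geqs k}\bold{\Lambda}(m)$ are summable over $k$ because the claim sizes have finite conditional means; the $\bold{W}(u-k)$ contribution is a finite sum since $\bold{W}(u-k)=\bold{0}$ for $k\geqs u$. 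Replace the spectral-radius claim by this finite-mean argument and your proof is complete and coincides with the paper's (implicit) one.
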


\subsection{Gerber-Shiu - Dual Risk Process} \label{GSDiv}

Recall that the dual risk process $\{\widehat{U}_n\}_{n \in \mathbb{N}}$ with initial value $u \in \mathbb{N}$, has dynamics (jumps) which are equivalent in distribution to a reflection of the regular process in the time axis, i.e., $\{\Delta \widehat{U}_n \}_{n \in \mathbb{N}} \overset{d}{=} \{- \Delta U_n \}_{n \in \mathbb{N}}$, whilst the phase process $\{J_n\}_{n \in \mathbb{N}}$ remains unchanged. In other words, the dual risk process is equivalent to a `spectrally positive' (downward-skip free) MAC. As such, it follows that exit times for the dual process coincide with corresponding exit times for the regular risk process. In particular, the ruin time for the dual risk process with initial capital $u \in \mathbb{N}$, denoted $\widehat{\tau}_0$, is equivalent (by reflection) to the hitting time of the level 0 for a spectrally negative MAC with initial value $-u$. Moreover, due to the translation invariance property of MACs, by shifting the level process this is also equivalent to the hitting time of the level $u \in \mathbb{N}$ of a spectrally negative MAC with initial level $0$, i.e., $\widehat{\tau}_0 \equiv \tau^+_u$ (see Fig:\ref{fig:1}).

\begin{figure}[h!]
\centering
\begin{subfigure}{.5\textwidth}
  \centering
  \includegraphics[width=8.0cm]{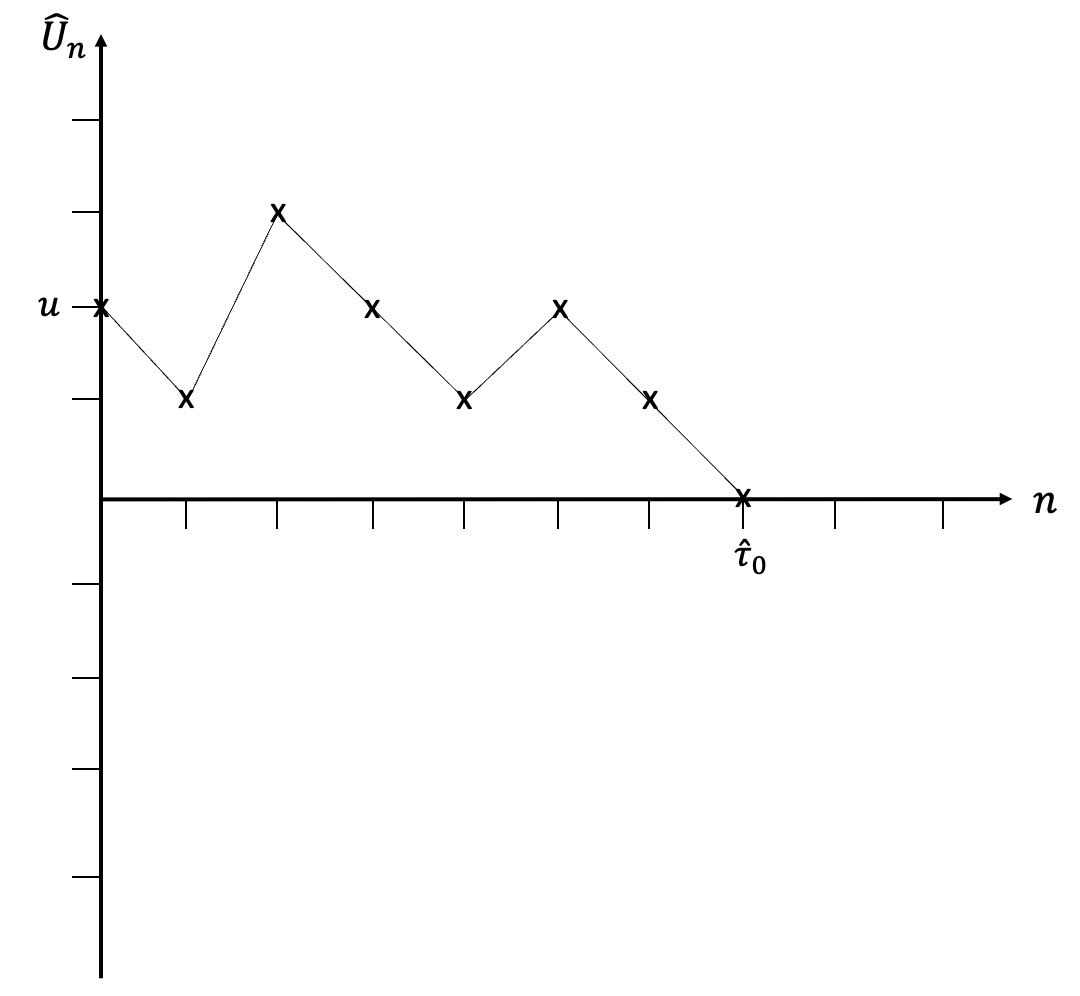}
  \label{fig:sub1}
\end{subfigure}%
\begin{subfigure}{.5\textwidth}
  \centering
  \includegraphics[width=8.0cm]{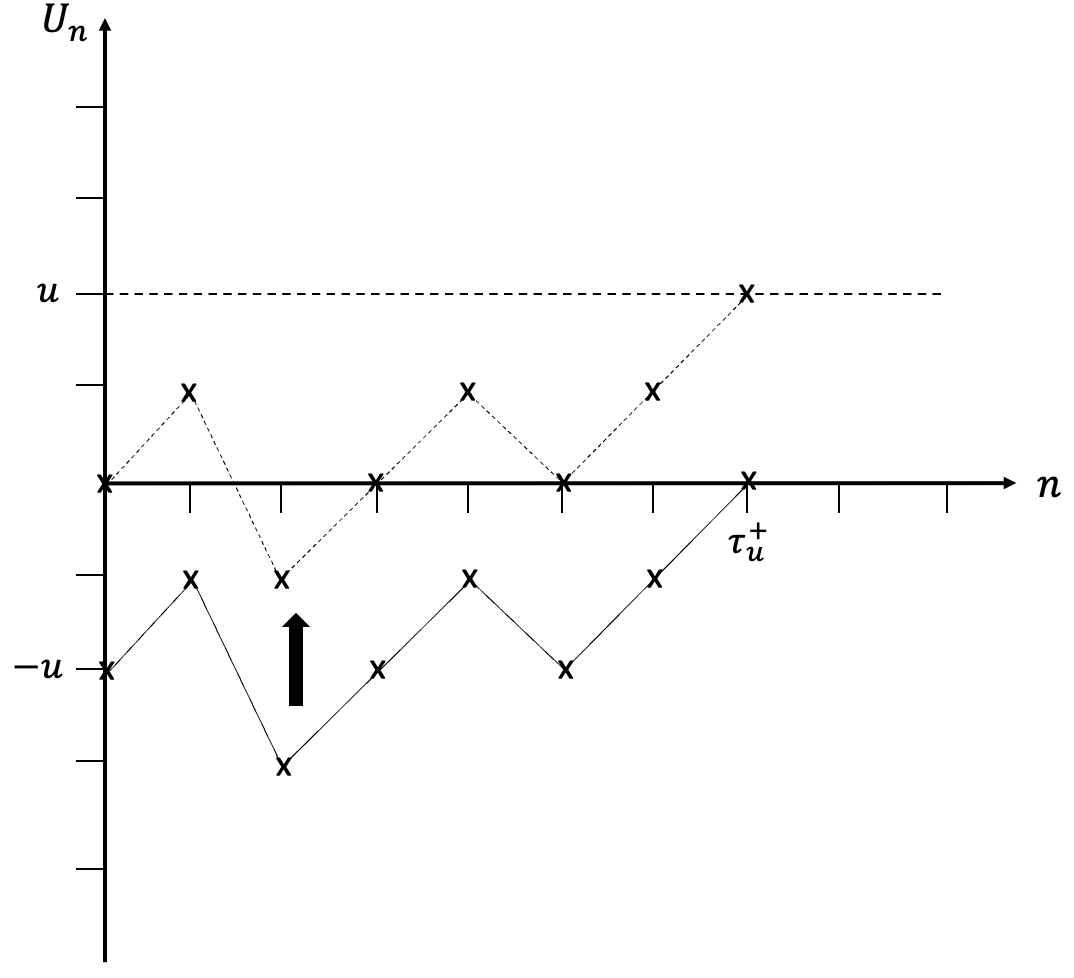}
  \label{fig:sub2}
\end{subfigure}
\caption{Equivalence of exit times between the dual and regular process. }
\label{fig:1}
\end{figure}

\noindent The above observation results directly in the following theorem.

\begin{Theorem} \label{G-SDual} For $u \in \mathbb{N}$, the G-S function for the dual risk process, namely $\widehat{\phi}(u)$, is given by
\begin{linenomath*}\begin{equation}
\widehat{\phi}(u) = \boldsymbol{\alpha}^\top \bold{G}_{v}^u \boldsymbol{e},
\end{equation}\end{linenomath*}
where $\boldsymbol{\alpha}^\top$ is the initial distribution of the phase process $\{J_n\}_{n \in \mathbb{N}}$ and the matrix $\bold{G}_v$ is defined in Eq.\,\eqref{GvMatrixOrig}.
\end{Theorem}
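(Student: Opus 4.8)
The plan is to reduce the statement to Theorem~\ref{ThmOneUp} by means of the reflection and translation-invariance observations recorded just above. First I would note that, since the dual surplus decreases by at most one unit per period, ruin of $\{\widehat{U}_n\}_{n \in \mathbb{N}}$ necessarily occurs through a downward unit step, so that $\widehat{U}_{\,\widehat{\tau}_0-1}=1$ and $\widehat{U}_{\,\widehat{\tau}_0}=0$ almost surely. Hence the penalty term in \eqref{GSfunctiondual} is the deterministic constant $\omega(1,0)$, and (adopting the normalisation used in \eqref{GSfunctiondual1}) the conditional G-S function collapses to
\begin{linenomath*}\begin{equation*}
\widehat{\phi}_{ij}(u)=\mathbb{E}\left[v^{\widehat{\tau}_0}\mathbb{I}_{(\widehat{\tau}_0<\infty,\,J_{\widehat{\tau}_0}=j)}\,\big|\,J_0=i,\,\widehat{U}_0=u\right],
\end{equation*}\end{linenomath*}
so it suffices to identify the matrix $\widehat{\bold{\Phi}}(u)$ whose $(i,j)$-th entry is $\widehat{\phi}_{ij}(u)$.

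Next I would make the reflection coupling precise. By \eqref{eqDual} the increments of $\{\widehat{U}_n\}$ have the same conditional law (given the current phase) as $-\Delta U_n$, while the modulating chain $\{J_n\}$ is left unchanged; writing $\{(X_n,J_n)\}$ for the spectrally negative MAC of Section~\ref{MACs} with $\bold{A}_m=\bold{\Lambda}(1-m)$ and $X_0=0$, the bivariate process $\{(\widehat{U}_n,J_n)\}$ started at $(u,i)$ therefore has the same law as $\{(u-X_n,J_n)\}$ started at $(0,i)$. Under this coupling $\{\widehat{U}_n\leqs 0\}=\{X_n\geqs u\}$, so $\widehat{\tau}_0=\inf\{n\in\mathbb{N}:X_n\geqs u\}=\tau^+_u$ and, on $\{\widehat{\tau}_0<\infty\}$, $J_{\widehat{\tau}_0}=J_{\tau^+_u}$. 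Consequently
\begin{linenomath*}\begin{equation*}
\widehat{\bold{\Phi}}(u)=\mathbb{E}_0\left(v^{\tau^+_u}\mathbb{I}_{(\tau^+_u<\infty)};J_{\tau^+_u}\right)=\mathbb{E}_0\left(v^{\tau^+_u};J_{\tau^+_u}\right),
\end{equation*}\end{linenomath*}
the last equality because the convention $v^{\infty}=0$ already enforces the restriction to $\{\tau^+_u<\infty\}$ when $v<1$, while for $v=1$ the substochastic matrix $\bold{G}$ carries exactly this information.

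It then remains only to apply Theorem~\ref{ThmOneUp} with $x=0$ and $a=u$: since $u\geqs 0$, Eq.~\eqref{GvMatrix} gives $\mathbb{E}_0\left(v^{\tau^+_u};J_{\tau^+_u}\right)=\bold{G}_v^{\,u}$, where $\bold{G}_v$ is the matrix of \eqref{GvMatrixOrig}, i.e.\ the right solution of $\bold{F}(\cdot)=v^{-1}\bold{I}$. Pre-multiplying by $\boldsymbol{\alpha}^\top$ and post-multiplying by $\boldsymbol{e}$ yields $\widehat{\phi}(u)=\boldsymbol{\alpha}^\top\bold{G}_v^{\,u}\boldsymbol{e}$, and setting $v=1$ recovers the ruin probability $\widehat{\psi}(u)=\boldsymbol{\alpha}^\top\bold{G}^u\boldsymbol{e}$. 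The argument is essentially bookkeeping, so I do not anticipate a genuine obstacle; the one point that needs care is the pathwise reflection step — specifically, checking that the phase chain is genuinely \emph{shared} (not time-reversed, nor re-weighted by $\boldsymbol{\pi}$ as in Section~\ref{sec:time-rev}) between the regular and dual models, and that the translation invariance of the MAC legitimately shifts the starting level so that $\widehat{\tau}_0$ and $\tau^+_u$ coincide jointly with the terminal phase, and not merely in marginal distribution.
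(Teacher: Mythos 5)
Your proposal is correct and follows essentially the same route as the paper: reduce the dual ruin time to the upward hitting time $\tau^+_u$ of a spectrally negative MAC started at $0$ via reflection and translation invariance, then invoke Theorem \ref{ThmOneUp} to obtain $\bold{G}_v^u$ and contract with $\boldsymbol{\alpha}^\top$ and $\boldsymbol{e}$. The extra care you take in making the coupling pathwise (shared phase chain, joint identification of the exit time and terminal phase) and in reducing the penalty to the normalisation of Eq.\,\eqref{GSfunctiondual1} is implicit in the paper's shorter argument but adds nothing that changes the approach.
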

\begin{proof} From Figure: \ref{fig:1} and the preceding discussion, it is clear that the ruin time of the dual risk process is equivalent to the upward hitting time of the level $u \in \mathbb{N}$ for a spectrally negative MAC with initial value $X_0 = 0$, i.e., $\widehat{\tau}_0 \equiv \tau^+_u$. Hence, it follows that
\begin{linenomath*}\begin{align*}
\widehat{\phi}_{ij}(u) &= \mathbb{E}_u\left( v^{\widehat{\tau}_0}\mathbb{I}_{(\widehat{\tau}_0 < \infty,\, J_{\widehat{\tau}_0} = j )} \bigg|J_0 = i \right) \notag \\
&= \mathbb{E}\left( v^{\tau^+_u}\mathbb{I}_{ (\tau^+_u < \infty,\, J_{\tau^+_u} = j )} \bigg| J_0 = i \right) \notag \\
&= \left(\bold{G}^u_{v} \right)_{i,j \in E},
\end{align*}\end{linenomath*}
which, along with the definition of the unconditional G-S function given in Eq.\,\eqref{DGS}, gives the result.
\end{proof}

\begin{Corollary} For $u \in \mathbb{N}$, the ruin probability for the dual risk process, namely $\widehat{\psi}(u)$, is given by
\begin{linenomath*}\begin{equation}
\widehat{\psi}(u) = \boldsymbol{\alpha}^\top \bold{G}^u \boldsymbol{e},
\end{equation}\end{linenomath*}
where $\boldsymbol{\alpha}^\top$ is the initial distribution of the phase process $\{J_n\}_{n \in \mathbb{N}}$ and $\bold{G} := \bold{G}_1$.
\end{Corollary}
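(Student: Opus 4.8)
The plan is to obtain this as an immediate specialisation of Theorem \ref{G-SDual}. Recall from Eq.\,\eqref{GSfunctiondual1} that, for the dual risk process, the Gerber--Shiu function already collapses to the discounted transform of the ruin time, $\widehat{\phi}_{ij}(u)=\mathbb{E}[v^{\widehat{\tau}_0}\mathbb{I}_{(\widehat{\tau}_0<\infty,\,J_{\widehat{\tau}_0}=j)}\,|\,J_0=i,\widehat{U}_0=u]$, because $\widehat{U}_{\,\widehat{\tau}_0-1}=1$ and $\widehat{U}_{\,\widehat{\tau}_0}=0$ a.s., so no genuine penalty term survives. Setting $v=1$ in this expression gives precisely $\widehat{\psi}_{ij}(u)=\mathbb{P}(\widehat{\tau}_0<\infty,\,J_{\widehat{\tau}_0}=j\,|\,J_0=i,\widehat{U}_0=u)$, i.e.\ the $(i,j)$-th entry of $\widehat{\bold{\Psi}}(u)$, as in Eq.\,\eqref{eq:ruin}.

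First I would invoke Theorem \ref{G-SDual}, which (via the equivalence $\widehat{\tau}_0\equiv\tau^+_u$ illustrated in Figure \ref{fig:1} and the one-sided upward transform of Theorem \ref{ThmOneUp}) identifies $\widehat{\bold{\Phi}}(u)$ entrywise with $\bold{G}_v^{u}$. Evaluating at $v=1$ and using the notational convention $\bold{G}:=\bold{G}_1$ then yields $\widehat{\bold{\Psi}}(u)=\bold{G}^{u}$ entrywise. Finally, averaging over the initial phase of $\{J_n\}_{n\in\mathbb{N}}$ with the weight vector $\boldsymbol{\alpha}^\top$ and collapsing the terminal-phase index with $\boldsymbol{e}$, exactly as in the definition \eqref{DGS} of the unconditional (dual) Gerber--Shiu function, gives $\widehat{\psi}(u)=\boldsymbol{\alpha}^\top\bold{G}^{u}\boldsymbol{e}$.

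There is essentially no obstacle in this argument; the only point worth flagging is that, under the dual net-profit condition $\kappa'(1)>0$, the matrix $\bold{G}=\bold{G}_1$ is in general only sub-stochastic --- the spectrally negative MAC associated (by reflection and translation) with the dual process drifts to $-\infty$, so $\tau^+_u=\infty$ with positive probability --- which is consistent with $\widehat{\psi}(u)<1$ and requires nothing beyond what Theorem \ref{G-SDual} already supplies.
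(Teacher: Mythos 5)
Your argument is correct and matches the paper exactly: the paper's proof is simply ``the result follows directly by setting $v=1$ in Theorem \ref{G-SDual}.'' Your additional remarks on the reduction of the dual Gerber--Shiu function to the transform of the ruin time and on the sub-stochasticity of $\bold{G}$ under the dual net-profit condition are accurate but not needed.
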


\begin{proof} The result follows directly by setting $v = 1$ in Theorem \ref{G-SDual}.
\end{proof}

\begin{Remark}
These results for the dual risk process have recently been derived in \cite{kim2022parisian} (Theorem 1), using similar conditioning arguments as those presented in previous sections.
\end{Remark}

\subsection{Constant Dividend Barrier Problem}

\noindent As discussed above, G-S theory covers a range of ruin and risk related measures in addition to those implicitly contained with the G-S function itself. One such quantity of interest is the expected accumulated discounted dividends until ruin under a (constant) dividend barrier strategy, where any surplus above the so-called dividend barrier $b \in \mathbb{N}$, is paid out as dividends to the shareholders.

With this in mind, let us introduce the amended risk process $\{V_n\}_{n \in \mathbb{N}}$, defined by
\begin{linenomath*}\begin{equation}
		\label{regulatedriskprocess}
		V_n=U_n-D_n
\end{equation}\end{linenomath*}
where $\{U_n\}_{n \in \mathbb{N}}$ denotes the regular risk process defined in Eq.\,\eqref{eqCDTRM1} and the `regulator' process
\begin{linenomath*}\begin{equation}
		\label{eqregulator}
		D_n = \left( \bar{U}_n \lor b\right) - b,
\end{equation}\end{linenomath*}
with $\bar{U}_n := \sup_{k \leqs n} U_k$, denotes the accumulated dividend payments up to period $n \in \mathbb{N}$ under a constant dividend barrier strategy with fixed dividend barrier $b\geqs u$. In a similar way, we can define the dividend regulated version of the dual risk process by $\{\widehat{V}_n\}_{n \in \mathbb{N}}$, such that
\begin{linenomath*}\begin{equation}\label{regulatedriskprocess}
		\widehat{V}_n=\widehat{U}_n-\widehat{D}_n
\end{equation}\end{linenomath*}
where $\{\widehat{D}_n\}_{n \in \mathbb{N}}$ denotes the accumulated dividends paid up to period $n \in \mathbb{N}$ under a dual risk model. Note that due to the upward-skip free property of the MAC, for the regular risk process dividends can only be paid at a unit rate per period whereas in the dual process, due to the presence of upward jumps, the dividend payments can take arbitrary size. Then, the expected accumulated discounted dividends until ruin, denoted by $v(u)$ and $\widehat{v}(u)$, for the regular and dual risk processes, are given by
\begin{linenomath*}\begin{equation*}
		\chi(u) = \boldsymbol{\alpha}^\top \bold{V}_v(u) \boldsymbol{e} \quad \text{and} \quad \widehat{\chi}(u) = \boldsymbol{\alpha}^\top \widehat{\bold{V}}_v(u) \boldsymbol{e},
\end{equation*}\end{linenomath*}
respectively, where
\begin{linenomath*}\begin{equation}\label{valuefunction}
		\left(\bold{V}_v(u)\right)_{i,j \in E}=\mathbb{E}_u\left[
		\sum_{k=1}^{\tau^b_0} v^{k} \Delta D_k \, \mathbb{I}_{(\tau^b_0 < \infty, J_{\tau^b_0} = j )} \bigg|J_0=i
		\right],
\end{equation}\end{linenomath*}
and
\begin{linenomath*}\begin{equation}\label{valuefunctiondual}
		\left(\widehat{\bold{V}}_v(u)\right)_{i,j \in E}= \mathbb{E}_u\left[
		\sum_{k=1}^{\widehat{\tau}^{\,b}_0} v^{k} \Delta \widehat{D}_k \, \mathbb{I}_{(\widehat{\tau}^{\,b}_0 < \infty, J_{\widehat{\tau}^{\,b}_0} = j )} \bigg|J_0=i
		\right],
\end{equation}\end{linenomath*}
with
\begin{linenomath*}\begin{equation}\label{ruintimereg}
		\tau^b_0=\inf\{n \in \mathbb{N}: V_n \leqs 0\}, \qquad \widehat{\tau}^{\,b}_0=\inf\{n \in \mathbb{N}: \widehat{V}_n \leqs 0\}.
\end{equation}\end{linenomath*}

%\textcolor{red}{\begin{Remark}
%	Note that the regulator process, $\{D_n\}_{n \in \mathbb{N}}$, describing the dividend payments within the `regular' risk process is a skip free, non-decreasing Markov-modulated random walk (discrete analogue of a continuous Markov additive pure drift subordinator), whilst the corresponding regulator, $\{\widehat{D}_n\}_{n \in \mathbb{N}}$, for the dual risk model is equivalent to a non-decreasing Markov-modulated random walk with general positive jumps, i.e., the discrete analogue of a continuous Markov additive pure jump subordinator). This is important when considering the jump processes $\Delta D_n$ and $\Delta \widehat{D}_n$ above.
%\end{Remark}}

\begin{Theorem}[Regular risk process]
\label{Thm:Divs}
For $v \in (0,1]$, it follows that $\bold{V}_v(b+x) = x + \bold{V}_v(b)$ for $x \in \mathbb{N}$, whilst for $u \in (0,b]$, we have
\begin{equation}
\label{eq:Div1}
\bold{V}_v(u) = \bold{W}_v(u)\left[\bold{W}_v(b+1) - \bold{W}_v(b)\right]^{-1}.
\end{equation}	
\end{Theorem}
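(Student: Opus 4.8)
The plan is to treat the dividend-regulated process $\{V_n\}$ as the regular MAC reflected at the barrier $b$, and to decompose the discounted dividend stream according to excursions of the level away from $b$, so that everything reduces to the two-sided exit quantities of Theorem~\ref{thm:TwoSideUp}. First I would establish the translation identity $\bold{V}_v(b+x) = x + \bold{V}_v(b)$ for $x \in \mathbb{N}$: starting from level $b+x$ above the barrier, the upward-skip-free property forces the surplus to be paid out as dividends one unit per period for exactly $x$ periods (with the phase evolving), contributing $\sum_{k=1}^{x} v^k \bold{P}^{\text{(appropriately)}}$-type terms --- but in fact, since dividends of total amount $x$ are paid with certainty before the level first returns to $b$ and $v=1$ gives the clean additive statement; for $v<1$ one must be slightly more careful, writing $\bold{V}_v(b+x)$ as the expected discounted amount paid during those forced unit payments plus the discounted continuation from level $b$. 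A cleaner route is to observe that from $b+x$ the process pays a unit dividend immediately (period $1$), dropping to $b+x-1$ (modulo the jump structure) --- actually, since the level cannot exceed $b+x$ and is at $b+x$, the very next step either stays by paying out or jumps down; tracking this gives a recursion $\bold{V}_v(b+x) = \mathbf{1} + \bold{V}_v(b+x-1)$-type identity that telescopes. I will present the $x \in \mathbb{N}$ claim via this telescoping/forced-payment argument.

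For the main formula on $u \in (0,b]$, the key step is a first-passage decomposition: starting from $u \le b$, dividends are paid only after the level first reaches $b$; so by the strong Markov and Markov additive properties,
\begin{linenomath*}\begin{equation*}
\bold{V}_v(u) = \mathbb{E}_u\left(v^{\tau_b^+}; \tau_b^+ < \tau_0^-, J_{\tau_b^+}\right)\bold{V}_v(b) = \bold{W}_v(u)\bold{W}_v(b)^{-1}\bold{V}_v(b),
\end{equation*}\end{linenomath*}
using Theorem~\ref{thm:TwoSideUp} for the two-sided upward passage probability. It then remains to identify $\bold{V}_v(b)$. For this I would set up a one-step (excursion) equation at the barrier: from level $b$, in one period the process pays out whatever would take it above $b$ (at most one unit, so it pays $1$ with the transition probabilities governing a move to $b+1$, i.e. contributes $v\,\bold{A}_{1}$-weighted terms) and otherwise drops into $[0,b]$ or below; after such a step the process is either ruined or restarts from some level $y \in (0,b]$, to which we re-apply the formula $\bold{V}_v(y) = \bold{W}_v(y)\bold{W}_v(b)^{-1}\bold{V}_v(b)$. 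Summing over the one-step transition matrices $\{\bold{A}_m\}$ and using Proposition~\ref{PropPGF2} together with the generating-function identity \eqref{eqWTran}, $\sum_{n\ge 0} z^n \bold{W}_v(n) = (v\bold{F}(z)-\bold{I})^{-1}$, the excursion equation collapses to a linear equation for $\bold{V}_v(b)$ whose solution is $\bold{V}_v(b) = \bold{W}_v(b)[\bold{W}_v(b+1) - \bold{W}_v(b)]^{-1}$. Substituting back into the displayed first-passage identity and cancelling $\bold{W}_v(b)^{-1}\bold{W}_v(b)$ yields \eqref{eq:Div1}.

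Alternatively --- and this may be the cleanest packaging --- I would derive $\bold{V}_v(b)$ directly from the two-sided theory applied to the strip $[0,b+1]$: a unit dividend is paid precisely at each visit to level $b$ that is immediately followed by an attempted up-crossing of $b$, equivalently at each up-crossing of $b+1$ by the \emph{unreflected} process before ruin; summing the discounted number of such crossings over all excursions, via the renewal/occupation structure encoded in $\bold{W}_v$, produces the factor $[\bold{W}_v(b+1)-\bold{W}_v(b)]^{-1}$ as the ``return'' operator (this is the discrete analogue of the $W'(b)$-type denominator in the continuous-time barrier problem). The main obstacle will be handling the boundary behaviour at $b$ correctly in discrete time: because the claim distribution has an atom at $0$ (so $\bold{A}_1 = \bold{\Lambda}(0) \neq \bold{0}$), the level genuinely sits at $b$ for geometrically many periods paying unit dividends, and one must be careful that ``the amount taken above $b$ in one step'' is exactly $\mathbb{I}_{(Y_k = 1)}$ and that the phase transitions during these sojourns are bookkept in the right order relative to the discounting $v^k$. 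I expect the verification that the excursion equation indeed simplifies --- i.e. that $\bold{W}_v(b)^{-1} - $ (one-step correction) equals $[\bold{W}_v(b+1)-\bold{W}_v(b)]^{-1}\bold{W}_v(b)^{-1}$ after invoking \eqref{eqWTran} --- to be the one genuinely computational step, but it is forced by the algebra once the decomposition is set up correctly.
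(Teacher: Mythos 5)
Your argument for the main identity \eqref{eq:Div1} is essentially the paper's proof: the first-passage decomposition $\bold{V}_v(u) = \bold{W}_v(u)\bold{W}_v(b)^{-1}\bold{V}_v(b)$ via Theorem~\ref{thm:TwoSideUp}, followed by a one-step conditioning at the barrier (with the $v\bold{A}_1(\bold{I}+\bold{V}_v(b))$ term for the drift-and-immediate-payout event) to pin down $\bold{V}_v(b) = [\bold{W}_v(b+1)\bold{W}_v(b)^{-1}-\bold{I}]^{-1}$. The only cosmetic difference is that you propose collapsing the one-step equation via the transform identity \eqref{eqWTran}, whereas the paper invokes its coefficient-wise form, the recursion $v\bold{A}_1\bold{W}_v(b+1) = \bold{W}_v(b) - v\sum_{k=0}^{b-1}\bold{A}_{-k}\bold{W}_v(b-k)$ (Lemma~\ref{Lem:Recursion}); these are equivalent, and your version is fine provided you actually extract that recursion rather than working with formal power series.

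The one place you go astray is the translation identity $\bold{V}_v(b+x) = x + \bold{V}_v(b)$. Your reading --- that the excess is paid out one unit per period over $x$ periods, or telescopes as $\bold{V}_v(b+x) = \mathbf{1} + \bold{V}_v(b+x-1)$ with payments occurring at successive times --- would produce a \emph{discounted} sum $\sum_{k=1}^{x} v^k(\cdots)$ and a discounted continuation, which is not $x + \bold{V}_v(b)$ for $v<1$. The correct justification, and the paper's, is that the regulator $D_n = (\bar{U}_n \lor b) - b$ gives $D_0 = x$ when $U_0 = b+x$: the entire excess is paid as a lump sum at time $0$ with no discounting, after which the regulated process sits at $b$. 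You half-identify this tension yourself but then commit to the period-by-period picture, which fails.
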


\begin{proof}
The first result follows directly from the fact that any initial capital above the level $b$ will be paid out immediately (at time 0) as dividends. For the second result, based on the definition of the discounted dividends $\bold{V}_v(u)$ given in Eq.\,\eqref{valuefunction}, conditioning on the event $\{\tau^+_b < \tau^-_0\}$ and employing the strong Markov and Markov additive properties give
\begin{align}
	\label{eq:V(u)}
	\bold{V}_v(u) &= \mathbb{E}_u\left(v^{\tau^+_b};\tau^+_b < \tau^-_0, J_{\tau^+_b} \right)\bold{V}_v(b) \notag \\
	&=\bold{W}_v(u)\bold{W}_v(b)^{-1}\bold{V}_v(b),
\end{align}
where the second equality follows from the result of Theorem \ref{thm:TwoSideUp}. As such, it remains only to determine the boundary condition $\bold{V}_v(b)$.

Conditioning on the events in the first period of time, recalling that the matrix $\bold{A}_m$ denotes the probability transition matrix for the surplus process increasing by $m$ levels and noting that if the surplus increases from level $b$ to $b+1$ via `drift', then a unit dividend is paid out immediately and the surplus returns to the level $b$, it follows that
\begin{align*}
	\bold{V}_v(b) &= v\left[\bold{A}_1 \left(\bold{I} + \bold{V}_v(b) \right) + \sum_{k=0}^{b-1} \bold{A}_{-k}\bold{V}_v(b-k)\right] \\
	&= v\left[\bold{A}_1 \left(\bold{I} + \bold{V}_v(b) \right) + \sum_{k=0}^{b-1}\bold{A}_{-k}\bold{W}_v(b-k)\bold{W}_v(b)^{-1}\bold{V}_v(b)\right],
\end{align*}
where we have used Eq.\,\eqref{eq:V(u)} in the second equality. Re-arranging the above gives an equivalent expression of the form

\begin{equation}
	\label{eq:prerecur}
\left[ \left(\bold{W}_v(b) - v\sum_{k=0}^{b-1}\bold{A}_{-k}\bold{W}_v(b-k)\right)\bold{W}_v(b)^{-1} - v\bold{A}_1\right]\bold{V}_v(b) = v\bold{A}_1,
\end{equation}
which can be reduced further due to the recursive relationship between the $\bold{W}_v$ scale matrices stated in the following Lemma with proof given in the Appendix.

\begin{Lemma}
	\label{Lem:Recursion}
	Assume that $\bold{A}_{1}$ is invertible. Then, for $v \in (0,1]$ and $b \in \mathbb{N}^+$ the scale matrices $\bold{W}_v(x)$ satisfy the recursive relationship
	\begin{equation*}	
		v\bold{A}_{1}\bold{W}_v(b+1) = \bold{W}_v(b) - v \sum_{k=0}^{b-1}\bold{A}_{-k}\bold{W}_v(b-k).
	\end{equation*}
\end{Lemma}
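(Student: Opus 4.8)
The plan is to obtain the recursion directly from the generating-function characterisation of the scale matrices, Eq.\,\eqref{eqWTran} in Theorem~\ref{thm:TwoSideUp}, which I would first rewrite in the form
\begin{equation*}
\bigl(v\bold{F}(z) - \bold{I}\bigr)\sum_{n=0}^\infty z^n \bold{W}_v(n) = \bold{I}.
\end{equation*}
Since $\bold{F}(z) = \sum_{m=-1}^\infty z^m\bold{A}_{-m}$ by Eq.\,\eqref{eqFMat} contains the term $z^{-1}\bold{A}_1$, the first step is to clear this pole by multiplying through by $z$ and re-indexing via $j = m+1$, using $z\bold{F}(z) = \sum_{j\geqs 0}z^j\bold{A}_{1-j}$, which yields the genuine power-series identity
\begin{equation*}
\Bigl(v\sum_{j=0}^\infty z^j\bold{A}_{1-j} - z\bold{I}\Bigr)\Bigl(\sum_{n=0}^\infty z^n\bold{W}_v(n)\Bigr) = z\bold{I}.
\end{equation*}

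The second step is to compare the coefficients of $z^{b+1}$ on both sides. For $b\in\mathbb{N}^+$ the right-hand side contributes $\bold{0}$, while the left-hand side contributes $v\sum_{j=0}^{b+1}\bold{A}_{1-j}\bold{W}_v(b+1-j) - \bold{W}_v(b)$. Using the boundary condition $\bold{W}_v(0) = \bold{0}$ from Theorem~\ref{thm:TwoSideUp} to discard the $j = b+1$ term, then splitting off the $j=0$ term and substituting $k = j-1$ in the remaining sum, this becomes
\begin{equation*}
v\bold{A}_1\bold{W}_v(b+1) + v\sum_{k=0}^{b-1}\bold{A}_{-k}\bold{W}_v(b-k) = \bold{W}_v(b),
\end{equation*}
which is exactly the claimed identity after moving the sum to the right-hand side.

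The only point that needs a word of justification is that Eq.\,\eqref{eqWTran} is \emph{a priori} an identity between matrix-valued analytic (meromorphic) functions, whereas the argument reads it off coefficientwise; this is legitimate because, under the standing hypothesis that $\bold{A}_1$ is non-singular, $\sum_{n\geqs 0}z^n\bold{W}_v(n) = \bigl(v\bold{F}(z)-\bold{I}\bigr)^{-1}$ genuinely admits a power-series expansion in a neighbourhood of $z=0$ — indeed $z\bigl(v\bold{F}(z)-\bold{I}\bigr) \to v\bold{A}_1$ as $z\to 0$ and this limit is invertible, so $\bigl(v\bold{F}(z)-\bold{I}\bigr)^{-1} = z\bigl(v\bold{A}_1 + O(z)\bigr)^{-1}$ is analytic at the origin — and two convergent power series that agree near $0$ have equal coefficients. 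I do not anticipate any genuine obstacle here; the main thing to get right is the index bookkeeping in the coefficient extraction, together with the observation that the re-indexing $k = j-1$ combined with $\bold{W}_v(0)=\bold{0}$ is precisely what converts the generating-function relation into the stated finite recursion.
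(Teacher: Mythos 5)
Your proposal is correct, but it takes a genuinely different route from the paper's own proof. The paper argues probabilistically: it performs a first-step decomposition of the two-sided exit quantity $\mathbb{E}\bigl(v^{\tau^+_a};J_{\tau^+_a}, \tau^+_a < \tau^-_{-b}\bigr)$, conditioning on the outcome of the first period, shifts each post-jump term using the Markov additive (translation-invariance) property, and then substitutes the identity $\mathbb{E}_x\bigl(v^{\tau^+_a};\tau^+_a<\tau^-_0, J_{\tau^+_a}\bigr)=\bold{W}_v(x)\bold{W}_v(a)^{-1}$ of Theorem \ref{thm:TwoSideUp}; cancelling the common factor $\bold{W}_v(a+b)^{-1}$ on the right yields the recursion. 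You instead work entirely on the transform side, extracting the coefficient of $z^{b+1}$ from $\bigl(v\bold{F}(z)-\bold{I}\bigr)\sum_{n}z^n\bold{W}_v(n)=\bold{I}$ after clearing the $z^{-1}$ pole. Your index bookkeeping is right ($z\bold{F}(z)=\sum_{j\geqs 0}z^j\bold{A}_{1-j}$, and the $j=b+1$ term vanishes because $\bold{W}_v(0)=\bold{0}$), and your justification for reading off coefficients is sound: Eq.\,\eqref{eqWTran} holds on $(0,1]$ off the finite set $\Gamma(\bold{G}_v)$, hence on an interval adjacent to the origin where both factors are absolutely convergent power series, so the Cauchy product and coefficient comparison are legitimate (your analyticity remark via $\bold{A}_1$ being invertible is one way to see this, though the interval argument alone suffices). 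The trade-off is the usual one: the paper's argument stays at the level of exit probabilities and makes the probabilistic meaning of each term in the recursion transparent, at the cost of invoking the invertibility of $\bold{W}_v(a+b)$ to cancel; your generating-function argument is purely mechanical and, notably, does not really need the invertibility of $\bold{A}_1$ to establish the displayed identity itself --- that hypothesis only becomes essential when one solves the recursion for $\bold{W}_v(b+1)$, as in the proof of Theorem \ref{Thm:Divs}.
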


\noindent From Lemma \ref{Lem:Recursion}, Eq.\eqref{eq:prerecur}, reduces to
\begin{equation*}
\left[ v\bold{A}_1\bold{W}_v(b+1)\bold{W}_v(b)^{-1} - v\bold{A}_1\right]\bold{V}_v(b) = v\bold{A}_1,
\end{equation*}
or equivalently
\begin{equation*}
	\left[ \bold{W}_v(b+1)\bold{W}_v(b)^{-1} - \bold{I}\right]\bold{V}_v(b) = \bold{I},
\end{equation*}
since it is assumed that $\bold{A}_{1}$ is invertible. Hence, the matrix $\left[ \bold{W}_v(b+1)\bold{W}_v(b)^{-1} - \bold{I}\right]$ is invertible and it follows that
\begin{equation*}
	\bold{V}(b) = \left[ \bold{W}_v(b+1)\bold{W}_v(b)^{-1} - \bold{I}\right]^{-1}.
\end{equation*}
Finally, substituting this form for $\bold{V}(b)$ back into Eq.\,\eqref{eq:V(u)}, we obtain
\begin{align*}
	\bold{V}(u) &= \bold{W}_v(u)\bold{W}_v(b)^{-1}\left[ \bold{W}_v(b+1)\bold{W}_v(b)^{-1} - \bold{I}\right]^{-1} \\
	&= \bold{W}_v(u)\left[ \bold{W}_v(b+1) - \bold{W}_v(b)\right]^{-1}.
\end{align*}

\end{proof}

\begin{Remark}
A vectorised version of the result of Theorem \ref{Thm:Divs} was first derived in \cite{Chen2014-hf} where only the initial phase of the external Markov chain was considered. Although it is not explicitly named in the paper, the proof of this result in \cite{Chen2014-hf} relies on an `auxiliary function' $W(\cdot)$ which is equivalent to the $\bold{W}_v$ scale matrix presented here.
\end{Remark}

\begin{Theorem}[Dual risk process]
\label{Thm:DivsDual}
For $v \in (0,1]$, it follows that $\bold{\widehat{V}}_v(b+x) = x + \bold{\widehat{V}}_v(b)$ for $x \in \mathbb{N}$, whilst for $u \in (0,b]$, we have
	\begin{align}
		\label{eq:DivDual}
		\bold{\widehat{V}}_v(u) &= \bold{Y}_v(b-u-1) - \bold{W}_v(b-u)\bold{W}_v(b)^{-1}\bold{Y}_v(b-1) \notag \\
		&\hspace{20mm}+ \left[\bold{Z}_v(1, b-u-1)-\bold{W}_v(b-u)\bold{W}_v(b)^{-1}\bold{Z}_v(1, b-1) \right]\bold{\widehat{V}}_v(b),
	\end{align}
where
\begin{align*}
	\bold{\widehat{V}}_v(b) &=  \left[\bold{I} - v\left\{\bold{A}_1\left(\bold{I} - \bold{W}_v(1)\bold{W}_v(b)^{-1}\bold{Z}_v(1, b-1)\right) + \sum_{k=0}^\infty \bold{A}_{-k}\right\} \right]^{-1} \\
	&\hspace{20mm}\times v\left\{\bold{A}_1\left(\bold{I} - \bold{W}_v(1)\bold{W}_v(b)^{-1}\bold{Y}_v(b-1)\right) + \sum_{k=0}^\infty k\bold{A}_{-k}\right\} .
\end{align*}
\end{Theorem}
\begin{proof}
The first part of the result is similar to that of Theorem \ref{Thm:Divs} and follows from the fact that any initial capital above the dividend level $b$ is paid out immediately as dividends.

For the second part of the result, recall that the dynamics of the dual model are equivalent in distribution to the reflection of the classic upward skip-free process. As such, by reflecting the dual process in the $x$-axis, shifting the resulting process upwards by $b$ and  employing the strong Markov property, it follows that
\begin{align}
	\label{eq:dualdiv1}
\bold{\widehat{V}}_v(u) &= \mathbb{E}_{b-u}\left(v^{\tau^-_0} X_{\tau^-_0} ;J_{\tau^-_0}, \tau^-_0 < \tau^+_b\right) + \mathbb{E}_{b-u}\left(v^{\tau^-_0} ;J_{\tau^-_0}, \tau^-_0 < \tau^+_b\right)\bold{\widehat{V}}_v(b).
\end{align}
The result of Eq.\,\eqref{eq:DivDual} follows directly by employing the results of Theorem \ref{thm:joint} and Corollary \ref{cor:deficit} of Section \ref{sec:ExitProb}.

For $\bold{\widehat{V}}_v(b)$, by conditioning on the events in the next period of time and applying the Markov additive property, we obtain
\begin{align*}
\bold{\widehat{V}}_v(b) &= v\bold{\widehat{A}}_{-1} \bold{\widehat{V}}_v(b-1) + v \sum_{k=0}^\infty \bold{\widehat{A}}_k\left(k + \bold{\widehat{V}}_v(b)\right) \\
&=  v\bold{A}_{1} \bold{\widehat{V}}_v(b-1) + v \sum_{k=0}^\infty k \bold{A}_{-k} + v\sum_{k=0}^\infty \bold{A}_{-k}\bold{\widehat{V}}_v(b).
\end{align*}
Substituting the form of Eq.\,\eqref{eq:DivDual} into the first term of the above expression and recalling that $\bold{Y}_v(1) = \bold{Z}_v(1,0) = \bold{I}$, after some re-arranging we obtain
\begin{align}
	\label{eq:dualgamma}
	&\left[\bold{I} - v\left\{\bold{A}_1\left(\bold{I} - \bold{W}_v(1)\bold{W}_v(b)^{-1}\bold{Z}_v(1, b-1)\right) + \sum_{k=0}^\infty \bold{A}_{-k}\right\} \right]\bold{\widehat{V}}_v(b) \notag \\
	&\hspace{50mm}=  v\left\{\bold{A}_1\left(\bold{I} - \bold{W}_v(1)\bold{W}_v(b)^{-1}\bold{Y}_v(b-1)\right) + \sum_{k=0}^\infty k\bold{A}_{-k}\right\} .
\end{align}
Finally, the result follows after multiplication (on the left) by $$\left[\bold{I} - v\left\{\bold{A}_1\left(\bold{I} - \bold{W}_v(1)\bold{W}_v(b)^{-1}\bold{Z}_v(1, b-1)\right) + \sum_{k=0}^\infty \bold{A}_{-k}\right\} \right]^{-1},$$ on both sides of Eq.\,\eqref{eq:dualgamma}.  The existence of this matrix follows from diagonal dominance. To see this, note that
the entries of the matrix $\left(\bold{I} - \bold{W}_v(1)\bold{W}_v(b)^{-1}\bold{Z}_v(1, b-1)\right)$ 
are non-negative and less than one since, by \eqref{eq:joint}, it is equivalent to 
$\mathbb{E}_1\left(v^{\tau^-_0}  ;J_{\tau^-_0}, \tau^-_0 < \tau^+_b\right)$.
Moreover,  we have $\bold{A}_1+\sum_{k=0}^\infty \bold{A}_{-k}=\bold{P}$.
Hence, the sum of the entries in each row of the matrix  
$v\left\{\bold{A}_1\left(\bold{I} - \bold{W}_v(1)\bold{W}_v(b)^{-1}\bold{Y}_v(b-1)\right) + \sum_{k=0}^\infty \bold{A}_{-k}\right\}$ are 
strictly less than one. This completes the proof. 
\end{proof}

%\section*{Acknowledgments}

\section*{Appendix}

\begin{proof}[\bf Proof of Lemma \ref{Lem:Recursion}]
For $a, b \in \mathbb{N}^+$, by conditioning on the events in the first period of time and applying the Markov additive property, it follows that
\begin{align*}
	\mathbb{E}\left(v^{\tau^+_a};J_{\tau^+_a}, \tau^+_a < \tau^-_{-b} \right) &= v \sum_{k=-1}^{b-1}\bold{A}_{-k}\mathbb{E}_{-k}\left(v^{\tau^+_a};J_{\tau^+_a}, \tau^+_a < \tau^-_{-b} \right) \\
	&= v \sum_{k=-1}^{b-1}\bold{A}_{-k}\mathbb{E}\left(v^{\tau^+_{a+k}};J_{\tau^+_{a+k}}, \tau^+_{a+k} < \tau^-_{-(b-k)} \right),
\end{align*}
which, from Theorem \ref{thm:TwoSideUp} and the assumption that $\bold{A}_1$ is invertible, is equivalent to
\begin{equation*}
	\bold{W}_v(b)\bold{W}_v(a+b)^{-1} = v \sum_{k=-1}^{b-1}\bold{A}_{-k} \bold{W}_v(b-k)\bold{W}_v(a+b)^{-1}.
\end{equation*}
The result follows after multiplying through on the right by $\bold{W}_v(a+b)$ and re-arranging.
\end{proof}

\newpage
\bibliography{DTGS-arXiv}

\end{document}